\theoremstyle{plain}
\newtheorem{thm}{Theorem}[section]
\newtheorem{lemma}[thm]{Lemma}
\newtheorem{maintheorem}{Theorem}
\theoremstyle{definition}
\newtheorem{defi}[thm]{Definition}
\newcommand{\RR}{{\mathbb R}}
\renewcommand{\epsilon}{\varepsilon}
\newcommand{\diam}{\operatorname{diam}}
\newcommand{\cC}{\EuScript{C}}
\newcommand{\cH}{\EuScript{H}}
\newcommand{\cO}{\EuScript{O}}
\newcommand{\sing}{\operatorname{Sing}}
\def \RR {{\mathbb R}}
\def \cC {{\mathcal C}}
\def \cH {{\mathcal H}}
\def \cO {{\mathcal O}}
\def \Hom {{\mathcal  Hom}}
\def \SQ {{\mathcal  SQ}}
\begin{document}

\title[$N$-expansive flows]
{ $N$-expansive flows}

\author{
 Alfonso Artigue, Welington Cordeiro
and Maria Jos\'e Pac\'ifico 
 }
 \thanks{M.J.P. was partially supported by CNPq,
 FAPERJ. \,
 A.A. was partially supported by PEDECIBA, ANII}

\maketitle

\begin{abstract}{We define the concept of $N$-expansivity for flows and extend some of the results already established for discrete dynamics and for  $CW$-expansive flows. 
We show examples of $N$-expansive flows but not expansive, and examples of $CW$-expansive flows but not $N$-expansive for any natural number $N$.}
We also define Komuro $N$-expansivity and prove that on compact surfaces it implies Komuro expansivity.
\end{abstract}

{\tiny AMS Classification: 37A35, 37B40, 37C10, 37C45, 37D45} 

\section{Introduction}\label{sec:intro} 

In this paper we consider the dynamics of flows on compact metric spaces
with some mild kind of expansiveness.
The notion of expansiveness was introduced in the middle of the twentieth century by Utz \cite{Ut} and, in the 90s, Kato introduced the notion of \emph{continuum-wise expansiveness, cw-expansiveness for short,} for homeomorphisms \cite{Ka}.
Later,  in \cite{ACP}, this notion was extended to flows and the authors
proved, among many other fundamental properties, that the topological entropy of a cw-expansive flow defined on  compact metric spaces with topological dimension greater than 1 is positive. The concept of  $N$-expansive flows was introduced
 by Cordeiro in his Thesis \cite{C}, extending  the notion of $N$-expansivity for homeomorphisms given in \cite{M}. In the context of discrete dynamics, it was proved in \cite{APV} that if $f$ is a $2$-expansive homeomorphism defined on a compact boundaryless surface $M$ with non-wandering set $\Omega(f)$ being the whole of $M$ then $f$ is expansive. It was also shown that this condition on the non-wandering set can not be relaxed.
In \cite{CC}  is analyzed the dynamics of $N$-expansive homeomorphisms satisfying the shadowing property. 
On compact surfaces, in \cite{A} some examples of Axiom A
diffeomorphisms are given which are $N$-expansive but not $(N-1)$-expansive. Moreover, it is shown that they are robustly $N$-expansive in the $C^r$ topology, for $r=N$.

The goal of this work is to give  a  definition of \emph{$N$-expansiveness} for flows,  extend some of the results already established for discrete dynamics for $N$-expansive flows and
prove in this context some of the results in \cite{ACP}.

  To announce precisely our result, let us recall some concepts and definitions already established. We  point out that throughout this paper, $M$ denotes a compact metric space.
 
A \emph{flow} in $M$ is a family of homemorphisms $\{X^t\}_{t\in\mathbb{R}}$ satisfying $X^0(x)=x$, for all $x\in M$ and $X^{t+s}(x)=X^t(X^s(x))$ for all $s,\, t \in \RR$ and $x\in M$. A \emph{continuum} is a compact connected set and it is \emph{non-degenerate} if it contains more than one point. We denote by $\cC(M)$ the set of all continuum subsets of $M$.

Let $\Hom_{0}(\mathbb{R})$ be the set of homeomorphisms on $\RR$
fixing the origin and if $A$ is a subset of $M$, $C^0(A,\RR)$ denotes
the set of real continuos maps defined on $A$. Given $A\subset M$, we denote  $C^0(A)$ the set of real continuous maps on $A$ and define $\cH(A)$ by {
\begin{equation}\label{def-H(A)} \{\alpha:A\rightarrow \Hom_{0}(\mathbb{R}); \, \exists\,\, x_\alpha\in A \, \mbox{with} \  \alpha(x_\alpha)=id_\mathbb{R} \ \mbox{and}  \ \alpha(.)(t)\in C^0(A), \forall \,\, t\in\mathbb{R} \}.
\end{equation}}
If $t\in\mathbb{R}$ and $\alpha\in \cH(A)$, set $\mathcal{X}^t_\alpha(A)=\{X^{\alpha(x)(t)}(x); x\in A\}$. 
See Figure 1.

\begin{figure}[htb]
\begin{center}
\includegraphics[height=4cm]{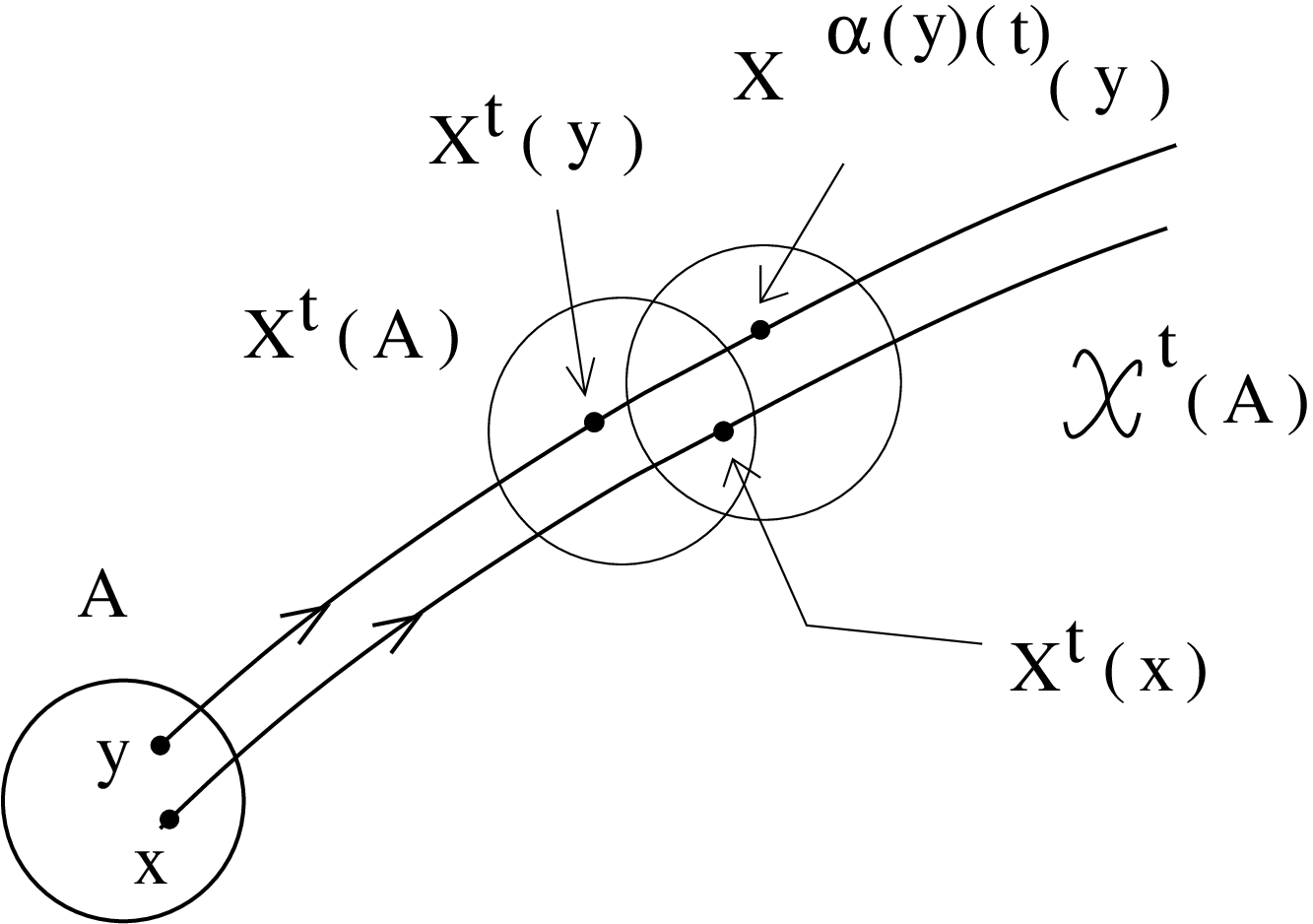}
\caption{ The set  $\mathcal{X}^t_\alpha(A)$} \label{fig1}
\end{center}
\end{figure}

\begin{defi}\label{def-n}
If $N>0$, we say a flow $X^t$ is \emph{$N$-expansive} if for any $\epsilon>0$ there is a $\delta>0$ such that if $A\subset M$ and $\alpha\in \cH(A)$ satisfies 
$$\diam(\mathcal{X}^t_\alpha(A) )<\delta, \ \ \forall t\in\mathbb{R},$$
then there is a subset $B\subset A$, with cardinality less than $N+1$ such that $A\subset X^{(-\epsilon,\epsilon)}(B)$.
\end{defi}

\begin{defi}\label{def-cw}\cite{ACP}
A flow $X^t$ is \emph{$cw$-expansive} if for any $\epsilon>0$ there is a $\delta>0$ such that if $A\subset M$ is a continuum and $\alpha\in \cH(A)$ satisfies 
$$\diam(\mathcal{X}^t_\alpha(A) )<\delta, \ \ \forall t\in\mathbb{R},$$
then $A\subset X^{(-\epsilon,\epsilon)}(x_\alpha)$.
\end{defi}

%Therefore, by \cite{ACP}, if a $N$-expansive flow is defined on compact metric space with dimension greater than $1$, then the topological entropy of the flow is positive.  

Our first results show that in the context of flows,  $N$-expansiveness implies cw-expansiveness 
and that  $N$-expansiveness is a conjugacy invariant. 

\begin{maintheorem}\label{t-Nimplica-cw} If a flow is $N$-expansive, with $N>0$, then it is $cw$-expansive. 
\end{maintheorem}

Therefore, by \cite{ACP}, if a $N$-expansive flow is defined on compact metric space with dimension greater than $1$, then the topological entropy of the flow is positive.  

\begin{maintheorem}\label{A}
$N$-expansiveness is a conjugacy invariant.
\end{maintheorem}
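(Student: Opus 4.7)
The plan is to directly transport the test data $(A,\alpha)$ across the conjugacy and reduce the diameter bound via uniform continuity. Let $h\colon M\to M'$ be a topological conjugacy between $X^t$ on $M$ and $Y^t$ on $M'$, so that $h\circ X^t = Y^t\circ h$ for every $t\in\RR$; I assume $X^t$ is $N$-expansive and want to show the same for $Y^t$. Given $\epsilon>0$, I would first take the modulus $\delta_0>0$ granted by the $N$-expansiveness of $X^t$ at $\epsilon$, and then, using uniform continuity of $h^{-1}$ on the compact space $M'$, choose $\delta>0$ so that $\diam(E)<\delta$ implies $\diam(h^{-1}(E))<\delta_0$ for every $E\subset M'$.

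The heart of the argument is the pull-back of reparametrizations. Suppose $A'\subset M'$ and $\alpha'\in\cH(A')$ satisfy $\diam(\mathcal{Y}^t_{\alpha'}(A'))<\delta$ for all $t$. Set $A:=h^{-1}(A')$ and define $\alpha\colon A\to\Hom(\RR,0)$ by $\alpha(x):=\alpha'(h(x))$. The point $x_\alpha:=h^{-1}(x_{\alpha'})$ satisfies $\alpha(x_\alpha)=\mathrm{id}_\RR$, and continuity of $x\mapsto\alpha(x)(t)$ follows from continuity of $h$ composed with $\alpha'(\cdot)(t)$, so $\alpha\in\cH(A)$. Applying the conjugacy pointwise gives $h(X^{\alpha(x)(t)}(x))=Y^{\alpha(x)(t)}(h(x))=Y^{\alpha'(h(x))(t)}(h(x))$, hence $h(\mathcal{X}^t_\alpha(A))=\mathcal{Y}^t_{\alpha'}(A')$ and therefore $\diam(\mathcal{X}^t_\alpha(A))<\delta_0$ for all $t\in\RR$ by the choice of $\delta$.

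From the $N$-expansiveness of $X^t$ I then extract $B\subset A$ with $|B|\le N$ and $A\subset X^{(-\epsilon,\epsilon)}(B)$; pushing forward, $B':=h(B)\subset A'$ has at most $N$ elements, and for each $y\in A'$ I write $y=h(x)$ with $x=X^s(b)$, $b\in B$, $|s|<\epsilon$, so that $y=Y^s(h(b))\in Y^{(-\epsilon,\epsilon)}(B')$. This gives $A'\subset Y^{(-\epsilon,\epsilon)}(B')$, proving $N$-expansiveness for $Y^t$. Running the same argument with $h$ and $h^{-1}$ interchanged yields the converse.

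The argument is essentially bookkeeping and I do not expect a genuine obstacle. The one point that needs attention is verifying that the pulled-back family $\alpha$ actually lies in $\cH(A)$, i.e.\ the joint continuity condition plus the existence of a basepoint mapped to $\mathrm{id}_\RR$; both are immediate consequences of $h$ being a homeomorphism and $\alpha'$ lying in $\cH(A')$.
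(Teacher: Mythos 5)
There is a genuine gap, and it comes from the notion of conjugacy you are using. The paper defines two flows to be \emph{conjugate} when there is a homeomorphism $h$ mapping orbits of $X^t$ onto orbits of $Y^t$ --- a topological equivalence that need not preserve the time parameter. You instead assume $h\circ X^t=Y^t\circ h$ for every $t$, and your central identity $h(X^{\alpha(x)(t)}(x))=Y^{\alpha(x)(t)}(h(x))$ is exactly the statement that fails under the paper's definition: moving time $\alpha(x)(t)$ along the $X$-orbit of $x$ corresponds, after applying $h$, to moving some \emph{other} amount of time along the $Y$-orbit of $h(x)$. Consequently the naive pull-back $\alpha(x):=\alpha'(h(x))$ does not produce a reparametrization whose associated sets $\mathcal{X}^t_\alpha(A)$ are the $h$-preimages of the $\mathcal{Y}^t_{\alpha'}(A')$. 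The paper handles this by defining the transported reparametrization implicitly, letting $\alpha_2(h(x))(t)$ be the unique time with $h(X^{\alpha_1(x)(t)}(x))=Y^{\alpha_2(h(x))(t)}(h(x))$, and then checking that $\alpha_2\in\cH(A_2)$; this step is the actual content of the proof and is absent from your argument.

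The same issue resurfaces at the end. Even with $B\subset A$ and $A\subset X^{(-\epsilon,\epsilon)}(B)$ in hand, pushing forward under an orbit-equivalence turns a time-$(-\epsilon,\epsilon)$ orbit segment into an orbit segment of uncontrolled time length, so $A'\subset Y^{(-\epsilon,\epsilon)}(B')$ does not follow directly. The paper sidesteps this by invoking the equivalent characterization of $N$-expansiveness from Section 2 (item (2) of the equivalence theorem): $A$ is contained in at most $N$ orbit segments inside a metric ball $B_\eta(x_\alpha)$, a condition that \emph{is} stable under uniformly continuous homeomorphisms. Your bookkeeping is correct for time-preserving conjugacies, but that proves a strictly weaker statement than the one claimed; to repair it you need both the implicit definition of the transported time-change and the metric-ball reformulation of the conclusion.
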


The next result establishes that the converse to Theorem \ref{t-Nimplica-cw} fails.

\begin{maintheorem}\label{teoA} There are examples of $cw$-expansive flows defined on a compact metric space, but not $N$-expansive for any natural number $N$. Moreover, for each $N \geq 2$ there are examples of flows $X^t$ and $Y^t$ defined on compact metric spaces $M$ and $L$ respectively, satisfying:
\begin{enumerate}
\item $X^t$ is $N$-expansive, but not expansive;
\item $Y^t$ is $N+1$-expansive, but not $N$-expansive.
\end{enumerate} 
\end{maintheorem}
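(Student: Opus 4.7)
The plan is to realize all three classes of examples as suspension flows of carefully chosen homeomorphisms, reducing the question to the discrete theory of $N$-expansive and cw-expansive maps already developed in~\cite{M,CC,Ka,APV}. Given a homeomorphism $f:K\to K$ on a compact metric space, let $X^t$ denote its suspension flow with constant roof function $1$, acting on the mapping torus $M_f=K\times[0,1]/(x,1)\sim(f(x),0)$. The first step is to prove a reduction lemma: $X^t$ is $N$-expansive (resp.\ cw-expansive) as a flow, in the sense of Definition~\ref{def-n}, if and only if $f$ is $N$-expansive (resp.\ cw-expansive) as a homeomorphism, and moreover the failure of $(N-1)$-expansiveness transfers from $f$ to $X^t$.

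Granted this reduction, items (1) and (2) of the theorem follow by suspending the Morales-type example in~\cite{M} of a homeomorphism that is $N$-expansive but not $(N-1)$-expansive (and, when $N=2$, in particular not expansive). For the first assertion of the theorem, I would suspend a cw-expansive homeomorphism that fails to be $N$-expansive for any $N$; such a homeomorphism can be built by starting with a hyperbolic (hence cw-expansive) homeomorphism, choosing a sequence $\{\gamma_n\}_{n\geq 2}$ of pairwise disjoint periodic orbits, and performing a Morales-type blow-up of $\gamma_n$ into $n$ parallel copies, with the diameters of these modifications decaying summably so that the resulting space remains compact and metric and cw-expansiveness is preserved.

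The forward direction of the reduction is the conceptual step: given $A\subset M_f$ and $\alpha\in\cH(A)$ with $\diam(\mathcal{X}^t_\alpha(A))<\delta$ for all $t$, one projects $A$ onto the cross-section $K\times\{0\}$ along flow lines; provided $\delta$ is small relative to the injectivity radius of the cross-section, this projection is well-defined and its image satisfies a discrete closeness condition under iteration of $f$, to which the hypothesis on $f$ applies. For the reverse direction and for transferring failure of $(N-1)$-expansiveness, the $N$ companion orbits of $f$ in $K$ lift tautologically to $N$ companion orbits of $X^t$ by choosing $\alpha\equiv\mathrm{id}_{\mathbb{R}}$.

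The main obstacle is the reparametrization freedom encoded in the family $\alpha\in\cH(A)$, which has no discrete analogue; it allows distinct points of $A$ to be compared at arbitrarily different times and therefore a priori decouples the flow-expansiveness from the discrete expansiveness of the cross-section map. I would need to show that, for $\delta$ sufficiently small, any admissible $\alpha$ differs from a global constant time-shift by a continuous correction of size at most $\epsilon$, so this correction can be absorbed into the $\epsilon$-flow-tube appearing on the conclusion side of Definition~\ref{def-n} without inflating the count of equivalence classes in $B$. A secondary technical issue, relevant to the cw-expansive example, is verifying cw-expansiveness after a countable iteration of blow-ups: one argues that any non-degenerate continuum either meets only finitely many blow-up regions, where cw-expansiveness is inherited from finitely many modifications of a hyperbolic map, or spills into the ambient hyperbolic part where expansion rules out small-diameter orbits.
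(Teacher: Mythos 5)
Your proposal follows essentially the same route as the paper: both reduce the theorem to the discrete setting via a suspension lemma (the paper's Theorem~\ref{th-sus}, proved together with the cited analogues for expansiveness and cw-expansiveness from \cite{BW} and \cite{ACP}) and then import known homeomorphism examples. For items (1) and (2) the paper invokes the examples of \cite{CC} and \cite{A} of $N$-expansive, non-$(N-1)$-expansive homeomorphisms, which is the same move as your appeal to Morales-type examples; and your identification of the reparametrization freedom in $\cH(A)$ as the main obstacle in the forward direction of the reduction is exactly where the paper's proof of Theorem~\ref{th-sus} concentrates its effort (it handles it by projecting $A$ to a level set $K\times\{1/2\}$ and reading off a discrete closeness condition, then absorbing the time shifts $r_x$ into the $(-\epsilon,\epsilon)$-flow tube).

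The one genuine divergence is the cw-expansive, never-$N$-expansive example. The paper simply cites Artigue's result that the pseudo-Anosov map with a $1$-prong singularity is cw-expansive but not $N$-expansive for any $N$, and suspends it. You instead propose building such a map by a countable sequence of Morales-type blow-ups of periodic orbits with summably decaying diameters. That construction is plausible but is the weak point of your proposal: preserving cw-expansiveness under infinitely many local modifications requires a uniform choice of cw-expansivity constant that survives all the blow-ups simultaneously (a single $\delta$ working for continua meeting arbitrarily many, arbitrarily small modified regions), and you only sketch why this should hold. If you want a self-contained argument you would need to carry that out carefully; otherwise the citation route the paper takes is the cheaper and safer option, and everything else in your plan matches the paper's proof.
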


The  result below shows an equivalent notion of $N$-expansivity using a
fixed collection of cross-sections with special properties, called $\delta$-adequate
(see Definition \ref{adequada}).

\begin{maintheorem} \label{teoB} A flow $X^t$ is $N$-expansive if, and only if, given a $\delta$-adequate  pair $(\mathcal{S},\mathcal{T})$ of cross-sections
there is  $\eta>0$ such that  
$$\#W_\eta^s(x)\cap W_\eta^u(x)\leq n, \ \forall x\in\bigcup_{i=1}^kT_i. $$  
\end{maintheorem}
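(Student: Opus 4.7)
The plan is to prove each implication by converting between a reparametrization $\alpha\in\cH(A)$ on a set $A$ of orbits that stay close for all time and a list of crossings through the adequate cross-sections in $\mathcal{T}$. Throughout I use the fact that a $\delta$-adequate pair $(\mathcal{S},\mathcal{T})$ organises the flow into a tubular neighbourhood structure in which every nearby orbit hits each $T_i$ in a well-defined first-return time, and where $W_\eta^s(x)$ and $W_\eta^u(x)$ are the local stable/unstable sets inside the transversals.

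For the implication $(\Rightarrow)$, assume $X^t$ is $N$-expansive with constant $\de$ corresponding to a prescribed $\ep>0$, and fix a $\delta$-adequate pair $(\mathcal{S},\mathcal{T})$. Suppose by contradiction that there exist $x\in\bigcup_i T_i$ and $N+1$ distinct points $y_0,\ldots,y_N\in W_\eta^s(x)\cap W_\eta^u(x)$ for arbitrarily small $\eta$. Using the adequacy of the cross-sections, I would build a reparametrization $\alpha\in\cH(A)$ on $A=\{y_0,\dots,y_N\}$, with $y_0$ as reference point $x_\alpha$, by demanding that $\alpha(y_j)(t)$ be the time for the orbit of $y_j$ to reach the same transversal as $X^t(y_0)$. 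The continuity requirement $\alpha(\cdot)(t)\in C^0(A,\RR)$ reduces to continuity of first-return times, which is built into the definition of adequacy. Since $y_j\in W_\eta^s(x)\cap W_\eta^u(x)$, the reparametrized orbits stay in a tube of arbitrarily small radius, so $\diam(\mathcal{X}^t_\alpha(A))<\de$ for all $t$. $N$-expansivity then forces $A\subset X^{(-\ep,\ep)}(B)$ for some $B$ of cardinality at most $N$; two of the $y_j$ would then lie on the same orbit within a time window of length $2\ep$, and the transversality of the sections (for $\ep$ smaller than a uniform flow-box time) forces these two points to coincide, a contradiction.

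For the implication $(\Leftarrow)$, given $\ep>0$, I would choose $\eta$ and $\delta$ so that an $\eta$-tube around any cross-section piece is disjoint from $X^s$ applied to the same section for $|s|\geq 2\ep$, and then take $(\mathcal{S},\mathcal{T})$ $\delta$-adequate with $\delta$ small enough that any $A$ and $\alpha\in\cH(A)$ satisfying $\diam(\mathcal{X}^t_\alpha(A))<\de$ for all $t$ forces every orbit in $A$ to cross the same section $T_i$ that $x_\alpha$ crosses, at matching reparametrized times. The crossing points then lie in $W_\eta^s(x)\cap W_\eta^u(x)$ for $x$ the crossing of $x_\alpha$; by hypothesis there are at most $N$ such crossings, so the orbits of $A$ split into at most $N$ equivalence classes under "same orbit up to a time shift of at most $\ep$", yielding a set $B$ of cardinality at most $N$ with $A\subset X^{(-\ep,\ep)}(B)$. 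The main obstacle I expect is the precise construction and continuity of the reparametrization $\alpha$ in the forward direction, together with the book-keeping needed in the reverse direction to ensure that distinct elements of $W_\eta^s(x)\cap W_\eta^u(x)$ really correspond to distinct time-$\ep$ orbit classes in $A$; the cleanest way to handle both issues is to work entirely with the discrete-time first-return map associated with $(\mathcal{S},\mathcal{T})$ and then lift the $N$-expansive estimate back to the continuous flow through the flow-box structure encoded in adequacy.
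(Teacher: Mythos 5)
Your proposal follows essentially the same route as the paper: both directions are handled by translating between reparametrizations $\alpha\in\cH(A)$ and the return-time data of the adequate sections, with the forward direction building a piecewise-linear time change matching successive crossings (then invoking $N$-expansivity and the fact that distinct points of one cross-section cannot lie on a common short orbit segment), and the reverse direction showing that orbits staying $\delta$-close under some reparametrization must have all their section crossings in $W^s_\eta(x)\cap W^u_\eta(x)$. The quantitative steps you defer as ``book-keeping'' are exactly the content of the paper's auxiliary constants $a_1,\dots,a_8$ and its case analysis on $|h(t_j)-s_j|$, so no genuinely different idea is involved.
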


As we will see in Lemma \ref{fixedpoint}, a $N$-expansive flow of a connected compact metric space (not reduced to a singleton) cannot have singular points.
Thus, from \cite[Remark 3.1]{A3} (and references therein)
we conclude that there is no $N$-expansive flows on a compact surface.
Motivated by this fact, we introduce a definition of Komuro $N$-expansivity, generalizing Komuro expansivity \cite{Ko}.

\begin{defi}
If $N>0$, we say a flow $X^t$ is \emph{Komuro $N$-expansive} if for any $\epsilon>0$ there is a $\delta>0$ such that if $A\subset M$ and $\alpha\in \cH(A)$ satisfies
$$\diam(\mathcal{X}^t_\alpha(A) )<\delta, \ \ \forall t\in\mathbb{R},$$
then there are $t_0\in\mathbb{R}$ and a subset $B\subset A$, with cardinality at most $N$ such that $\mathcal{X}^{t_0}_\alpha(A)\subset X^{(-\epsilon,\epsilon)}(B)$.
\end{defi}

Notice that Komuro 1-expansive is equivalent to Komuro expansive.
We introduce another definition related to H. Kato's \emph{continuum-wise expansivity}.

\begin{defi}
If $N>0$, we say a flow $X^t$ is \emph{Komuro cw-expansive} if for any $\epsilon>0$ there is a $\delta>0$ such that if $A\subset M$ is connected
and $\alpha\in \cH(A)$ satisfies
$$\diam(\mathcal{X}^t_\alpha(A) )<\delta, \ \ \forall t\in\mathbb{R},$$
then there are $t_0\in\mathbb{R}$ and $x\in A$ such that $\mathcal{X}^{t_0}_\alpha(A)\subset X^{(-\epsilon,\epsilon)}(x)$.
\end{defi}

It was proved in \cite{APV} that expansivity and N-expansivity are not equivalent in the context of homeomorphisms defined on compact surfaces. In contrast, for flows on compact surfaces,  Komuro expansivivity is equivalent to Komuro N-expansivity (for every N), as it is shown in the next result.

\begin{maintheorem}\label{D}
A flow on a compact surface is
Komuro $N$-expansive if and only if it is
Komuro expansive.
There are Komuro cw-expansive flows on compact surfaces which are not Komuro $N$-expansive.
\end{maintheorem}

To finish, let us introduce a definition in order to indicate a generalization of Theorem \ref{D} and state an open problem or future direction of research.

\begin{defi}
A flow $X^t$ is \emph{Komuro finite(countable)-expansive} if for any $\epsilon>0$ there is a $\delta>0$ such that if $A\subset M$ and $\alpha\in \cH(A)$ satisfies
$$\diam(\mathcal{X}^t_\alpha(A) )<\delta, \ \ \forall t\in\mathbb{R},$$
then there are $t_0\in\mathbb{R}$ and a finite (countable) subset $B\subset A$ such that $\mathcal{X}^{t_0}_\alpha(A)\subset X^{(-\epsilon,\epsilon)}(B)$. Recall that $ \cH(A)$
is as at (\ref{def-H(A)}).
\end{defi}

It seems clear that the proof of Theorem \ref{D} works for Komuro finite-expansive flows on surfaces.
To get the result to countable-expansivity, it is needed first to control or bound the number of singularities or to work directly with infinitely many  singularities.
It would be interesting to characterize Komuro countable-expansive flows of compact surfaces.

This paper is organized as follows: in Section \ref{sec-basic} we prove basic properties satisfied by $CW$-expansive flows and prove Theorems  \ref{t-Nimplica-cw} and \ref{A}; in Section \ref{section2} we prove the relation between $N$-expansivity of homemomorphisms and their suspension flows, and we apply these properties to prove Theorem \ref{teoA}; in Section \ref{section3} we use Keynes and Sears's  techniques to find good properties in cross sections of $N$-expansive flows and prove Theorem \ref{teoB};
in Section \ref{secSingNexp} we prove Theorem \ref{D}.

Acknowledgement. We thank the anonymous referee for calling our attention to the reference \cite{LMS} where it is considered the notion of $N$-expansivity for flows from a previous version of the present paper.

\section{Basic properties of $N$-expansive Flows}\label{sec-basic}

Let $M$ be a compact metric space and $X^t$ a $cw$-expansive flow on $M$.   
In this section we prove basic results satisfied by $X^t$.
Given $x \in M$, let $\cO(x)=\{ X^t(x), \, t \in \RR\}$ be the orbit of $x$. A point $x\in M$ is  \emph{fixed} by $X^t$ if $\cO(x)=\{ x\}$.  
Recall that $\cO(x)$ is regular if it is not a unique point. 

\begin{lemma} If $X^t$ is a $N$-expansive flow, then each fixed point of $X^t$ cannot be acumulated by fixed points. 
\end{lemma}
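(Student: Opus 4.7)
The plan is to argue by contradiction. Suppose a fixed point $p$ of $X^t$ is accumulated by fixed points; then we can choose $N+1$ distinct fixed points $p_1,\dots,p_{N+1}$ different from $p$ and arbitrarily close to $p$. The strategy is to package the finite set $A=\{p,p_1,\dots,p_{N+1}\}$, together with a trivial choice of time reparametrization $\alpha$, into a witness against $N$-expansivity: because every point of $A$ is fixed, no time shift can ever move one point of $A$ onto any other, so the $N$-expansivity condition forces $|A|\le N$, contradicting $|A|=N+2$.

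More precisely, fix any $\epsilon>0$ and let $\delta>0$ be the constant associated to $\epsilon$ by Definition \ref{def-n}. Using the accumulation hypothesis, pick $N+1$ distinct fixed points $p_1,\dots,p_{N+1}$ with $d(p_i,p)<\delta/2$ and set $A=\{p,p_1,\dots,p_{N+1}\}$, which has cardinality $N+2$ and diameter less than $\delta$. Define $\alpha\in\cH(A)$ by $\alpha(x)=id_{\RR}$ for every $x\in A$; this clearly belongs to $\cH(A)$ since the map $x\mapsto\alpha(x)(t)=t$ is constant and hence continuous on $A$.

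Since every point of $A$ is fixed, $X^{\alpha(x)(t)}(x)=X^{t}(x)=x$ for all $x\in A$ and all $t\in\RR$. Therefore
\[
\cX^{t}_{\alpha}(A)=A,\qquad \diam(\cX^{t}_{\alpha}(A))=\diam(A)<\delta, \ \forall\,t\in\RR.
\]
$N$-expansivity then provides $B\subset A$ with $\#B\le N$ such that $A\subset X^{(-\epsilon,\epsilon)}(B)$. But since $B$ consists entirely of fixed points, $X^{(-\epsilon,\epsilon)}(B)=B$, so $A\subset B$ and $N+2=\#A\le\#B\le N$, a contradiction.

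The argument is essentially routine once one realizes that the trivial reparametrization $\alpha\equiv id_{\RR}$ is admissible in the definition of $N$-expansivity; the only subtlety to check is that this $\alpha$ genuinely lies in $\cH(A)$. No step looks to be a serious obstacle.
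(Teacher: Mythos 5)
Your proof is correct and follows essentially the same approach as the paper: choose more than $N$ fixed points in a $\delta$-ball around $p$, observe that $\mathcal{X}^t_\alpha(A)=A$ has small diameter for all $t$, and derive a cardinality contradiction from the definition of $N$-expansivity. In fact your write-up is more complete than the paper's, which stops after establishing $\diam\mathcal{X}^t_\alpha(A)<\delta$ and leaves the final contradiction (that $X^{(-\epsilon,\epsilon)}(B)=B$ for a set of fixed points, forcing $\#A\le N$) implicit.
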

\begin{proof} The proof is simple and goes by contradiction. Let $p$ be a fixed point of $X^t$ accumulated by fixed points and $\delta>0$ be given by the $N$-expansive property for $\epsilon=1$. Therefore there are infinitely many points in $B_\delta(p)$. Let $A$ be a set with only $N+1$ of these fixed points and $p\in A$. Hence for any $\alpha\in \mathcal{H}(A)$ we have
$$\diam\mathcal{X}^t_\alpha(A)=\diam A<\delta, \ \forall t\in\mathbb{R},$$   
leading to a contradiction, because since the flow is $N$-expansive, $A$ should have at most $N$ different orbits.
\end{proof}

Since each $N$-expansive flow is a $cw$-expansive flow, by \cite[Lemma 2.1]{ACP} each fixed point cannot be accumulated by regular points. Moreover, each $N$-expansive flow  has only a finite number of fixed points. Combining this with the previous lemma we have,

\begin{lemma}\label{fixedpoint} If $X^t$ is a $N$-expansive flow, then there are a finite number of fixed points and each one is an isolated point of the space.
\end{lemma}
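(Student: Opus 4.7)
The plan is to argue by contradiction, combining directly the two facts already noted right before the statement of the lemma. The key dichotomy I would exploit is that every point of $M$ is either a fixed point of $X^t$ or lies on a regular orbit; there are no other possibilities.

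First I would suppose that some fixed point $p$ is not isolated in $M$. Then there exists a sequence $(x_n)\subset M\setminus\{p\}$ with $x_n\to p$. By passing to a subsequence, I may assume either that every $x_n$ is a fixed point of $X^t$, or that every $x_n$ is a regular point. The first case directly contradicts the previous lemma, which asserts that no fixed point of an $N$-expansive flow can be accumulated by fixed points. The second case contradicts \cite{ACP}[Lemma 2.1], applicable because $N$-expansivity implies $cw$-expansivity, and that lemma rules out accumulation of a fixed point by regular points. In either case we obtain a contradiction, forcing $p$ to be isolated.

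There is essentially no obstacle here since the statement is a clean consequence of the two preceding observations; the only thing worth stating carefully is the passage to a subsequence that singles out one of the two types of points, which is valid precisely because the partition of $M$ into fixed points and regular points is exhaustive.
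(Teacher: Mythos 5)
Your argument is correct and is essentially identical to the paper's, which likewise obtains the lemma by combining the preceding lemma (no accumulation by fixed points) with \cite{ACP}[Lemma 2.1] (no accumulation by regular points, via the implication from $N$-expansivity to $cw$-expansivity). Your explicit subsequence dichotomy just spells out the step the paper leaves implicit.
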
 

Let
$\Sigma_\mathbb{R}(0)=\{\{x_i\}_{i\in\mathbb{Z}}; x_i\in\mathbb{R} \ \mbox{and} \ x_0=0\}.
$
If $A\subset M$ define
\begin{eqnarray*} \SQ(A)=\{\beta:A\rightarrow\Sigma_\mathbb{R}(0); \exists \,\, x_\beta \ \mbox{such that} \ \beta(x_\beta)_i\rightarrow\infty \ \mbox{when} \ i\rightarrow\infty \ \mbox{and} \\ \beta(x_\beta)_i\rightarrow-\infty \ \mbox{when} \ i\rightarrow-\infty\}.
\end{eqnarray*}
Let $\SQ^*(A)\subset \SQ(A)$ so that if $\beta\in \SQ^*(A)$, then for each  $i\in\mathbb{Z}$ the map 
\begin{eqnarray*} f_i:A\rightarrow\mathbb{R}, \quad \quad
f_i(a)=\beta(a)_i \quad \mbox{is continuous}.
\end{eqnarray*}
If $\beta\in \SQ^*(A)$ and $i\in\mathbb{Z}$ define $\mathcal{X}_\beta^i(A)=\{X^{\beta(x)(i)}(x); \ x\in A\}$.

\begin{thm} Let $X^t$ be a flow without fixed points. The following properties are equivalent:
\begin{enumerate}
\item[(1)] $X^t$ is $N$-expansive;
\item [(2)] $\forall\,\,\eta>0$ there is $\delta>0$ so that if there is $\alpha\in \cH(A)$ with $\diam(\mathcal{X}^t_\alpha(A))<\delta$ $\forall t\in\mathbb{R}$, then $A$ is contained in at most $N$ orbit segments inside $B_\eta(x_\alpha)$;
\item[(3)] $\forall\epsilon>0$ there is $\delta>0$ such that if there exists $\beta\in SQ^*(A)$, with $\beta(x_\beta)_{i+1}-\beta(x_\beta)_{i}\leq \eta$ and $\sup_{a\in A}|\beta(a)_{i+1}-\beta(a)_{i}|\leq \delta$ $\forall i\in\mathbb{Z}$, such that $\diam(\mathcal{X}^i_\beta(A))<\delta$ $\forall i\in\mathbb{Z}$, then there is a subset $B\subset A$, with cardinality less than $n+1$ such that $A\subset X^{(-\epsilon,\epsilon)}(B)$.
\end{enumerate}
\end{thm}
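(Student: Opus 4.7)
The plan is to prove the three-way equivalence by cycling $(1)\Rightarrow(2)\Rightarrow(3)\Rightarrow(1)$. Conditions (1) and (3) each cover $A$ by $N$ short orbit arcs, differing only in whether the reparametrization $\alpha$ is continuous or given by a discrete sample $\beta$; condition (2) is a geometric reformulation asking that the arcs sit inside a prescribed ball $B_\eta(x_\alpha)$. The no-fixed-point hypothesis will enter through uniform continuity of $(t,y)\mapsto X^t(y)$ on compact sets $M\times[-T,T]$ and through the flow-box theorem, which provides a local product structure around each orbit.

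For $(1)\Rightarrow(2)$, fix $\eta>0$. By uniform continuity on $M\times[-1,1]$, pick $\epsilon\in(0,1)$ with $d(X^s(y),y)<\eta/4$ for every $y\in M$ and $|s|\le\epsilon$. Let $\delta_1$ be the constant furnished by (1) for this $\epsilon$, and set $\delta:=\min\{\delta_1,\eta/4\}$. Any $\alpha\in\mathcal{H}(A)$ verifying the hypothesis of (2) also verifies that of (1), since $\mathcal{X}^0_\alpha(A)=A$; hence $A\subset X^{(-\epsilon,\epsilon)}(B)$ with $|B|\le N$. A triangle-inequality check places each arc $X^{[-\epsilon,\epsilon]}(b_j)$ inside $B_\eta(x_\alpha)$: $b_j$ is an $\epsilon$-backward shift of some $a\in A\subset B_{\eta/4}(x_\alpha)$, so $d(b_j,x_\alpha)<\eta/2$, and a further $\epsilon$-shift of $b_j$ moves it at most $\eta/4$.

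For $(2)\Rightarrow(3)$, interpolate linearly in the base-point time: given $\beta\in\SQ^*(A)$ as in the hypothesis of (3), define $\alpha\in\mathcal{H}(A)$ by
\[\alpha(a)(s)=\beta(a)_i+\frac{s-\beta(x_\beta)_i}{\beta(x_\beta)_{i+1}-\beta(x_\beta)_i}\bigl(\beta(a)_{i+1}-\beta(a)_i\bigr)\]
for $s\in[\beta(x_\beta)_i,\beta(x_\beta)_{i+1}]$. Continuity of $\alpha(\cdot)(s)$ in $a$ is inherited from $\beta\in\SQ^*(A)$, monotonicity from the smallness of the increments, and $\alpha(x_\beta)=\mathrm{id}$ is automatic. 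A two-step triangle inequality combined with uniform continuity of the flow on time windows of length $\eta$ bounds $\diam(\mathcal{X}^s_\alpha(A))$ by $\diam(\mathcal{X}^i_\beta(A))$ plus an error tending to $0$ with $\eta$ and $\delta$; choosing the thresholds small enough, (2) applies to $\alpha$ and produces $N$ orbit arcs inside a small ball about $x_\beta$ covering $A$. The flow-box theorem, valid because $X^t$ has no fixed points, lets us recenter each arc as $X^{(-\epsilon,\epsilon)}(b_j)$, yielding the conclusion of (3).

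For $(3)\Rightarrow(1)$, reverse the construction by sampling: from $\alpha\in\mathcal{H}(A)$ as in the hypothesis of (1), set $\beta(a)_i:=\alpha(a)(is_0)$ for a small step $s_0>0$. Then $\beta\in\SQ^*(A)$, the base-point increments equal $s_0\le\eta$, and applying (3) gives the conclusion of (1). The main obstacle is the uniform bound $\sup_{a\in A}|\beta(a)_{i+1}-\beta(a)_i|\le\delta$: a priori $\alpha(a)$ can stretch time arbitrarily, and one must argue that $\diam(\mathcal{X}^t_\alpha(A))<\delta_0$ holding for \emph{all} $t$ prevents this. This is where the absence of fixed points is essential: the flow-box theorem yields a uniform two-sided comparison between $\alpha(a)$ and the identity on short intervals, which bounds the discretization error and closes the cycle.
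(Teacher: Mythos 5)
Your proposal is correct in its main constructions and these coincide with the paper's: the passage from a discrete reparametrization $\beta$ to a continuous one $\alpha$ by piecewise-linear interpolation on the intervals $[\beta(x_\beta)_i,\beta(x_\beta)_{i+1}]$, the reverse passage by sampling, and the use of compactness plus the absence of fixed points to convert ``orbit segments of small diameter'' into ``orbit segments of small time-length'' (and vice versa) when relating $(2)$ to the other conditions. The differences are organizational and one is substantive. Organizationally, you cycle $(1)\Rightarrow(2)\Rightarrow(3)\Rightarrow(1)$, while the paper proves $(1)\Leftrightarrow(3)$ and $(1)\Leftrightarrow(2)$ separately; this forces you to route the ``segments in a small ball have time-length $<2\epsilon$'' argument through $(2)\Rightarrow(3)$ rather than through $(2)\Rightarrow(1)$, but the content is identical. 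The substantive difference is in the sampling direction. You take a \emph{uniform} step, $\beta(a)_i=\alpha(a)(is_0)$, and then must establish $\sup_{a\in A}|\beta(a)_{i+1}-\beta(a)_i|\le\delta$; you correctly identify this as the crux and correctly attribute its resolution to the fixed-point-free hypothesis (the arc $\{X^{\alpha(a)(t)}(a):t\in[is_0,(i+1)s_0]\}$ has diameter at most $2\sup_t\diam(\mathcal{X}^t_\alpha(A))$ plus a modulus-of-continuity term, and on a compact space without fixed points an orbit arc of sufficiently small diameter must have time-length $<\delta$). But you leave this at the level of an assertion about ``the flow-box theorem,'' and as stated (``a uniform two-sided comparison between $\alpha(a)$ and the identity'') it is slightly misleading: $\alpha(a)(t)$ may drift arbitrarily far from $t$ over long times, and only the \emph{increments} over windows of length $s_0$ are controlled. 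The paper sidesteps this entirely by choosing the sample times $t_i$ \emph{adaptively}: using compactness of $A$ and continuity of $a\mapsto\alpha(a)(t)$ (a Dini-type argument), it picks $t_{i+1}>t_i$ so that $\sup_{a\in A}|\alpha(a)(t_{i+1})-\alpha(a)(t_i)|<\delta$ holds by construction, with no appeal to the absence of fixed points in that step. Your route can be completed, but you should either carry out the diameter-versus-time-length estimate explicitly or switch to the adaptive choice of sample times, which is shorter. (A shared loose end, present in the paper as well: the piecewise-linear interpolation defines a homeomorphism of $\RR$ only if each bisequence $\beta(a)_i$ is strictly monotone, which the hypotheses of $(3)$ do not literally guarantee; your appeal to ``smallness of the increments'' does not supply monotonicity either.)
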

\begin{proof} $(1) \Rightarrow (3):$ Take any $\epsilon>0$ and let $\delta>0$ be given by $N$-expansivity property. Choose $\eta>0$ such that 
$$\eta+(2\sup_{z\in M, |u|<\eta}d(z,X^u(z)))<\frac{\delta}{2}. $$
Assume that for some $\beta\in \SQ^*(A)$, with $\beta(x_\beta)_{i+1}-\beta(x_\beta)_{i}\leq \eta$ and $\sup_{a\in A}|\beta(a)_{i+1}-\beta(a)_{i}|\leq \delta$  $\forall i\in\mathbb{Z}$, we have $\diam(\mathcal{X}^i_\beta(A))<\eta$, $\forall i\in\mathbb{Z}$.  For each $a\in A$ we define a homeomorphism $\alpha(a)$ by $\alpha(a)(\beta(x_\beta)_i)=\beta(a)_i$, $\forall i\in\mathbb{Z}$ and by linearity on each interval $(\beta(x_\beta)_i,\beta(x_\beta)_{i+1})$. Then $\alpha\in \cH(A)$. 
Therefore, if $t_i=\beta(x_\beta)_i$, for each $t\in[t_i,t_{i+1})$, we get
\begin{eqnarray*} \diam(\mathcal{X}_\beta^t(A))&=&\sup_{a,b\in A} d(X^{\beta(a)(t)}(a),X^{\beta(b)(t)}(b)) \\
&\leq&\sup_{a,b\in A} (d(X^{\beta(a)(t)}(a),X^t(x_\beta))+d(X^t(x_\beta),X^{\beta(b)(t)}(b))) \\
&\leq&\sup_{a\in A}d(X^{\beta(a)(t)}(a),X^t(x_\beta))+\sup_{b\in A}d(X^t(x_\beta),X^{\beta(b)(t)}(b))) \\
&=&2\sup_{a\in A}d(X^t(x_\beta), X^{\beta(a)(t)}(a))\,.
\end{eqnarray*} 

But, for each $a\in A$ we have that
\begin{eqnarray*} d(X^t(x_\beta), X^{\beta(a)(t)}(a))&\leq&d(X^t(x_\beta),X^{t_i}(x_\beta))+d(X^{t_i}(x_\beta),X^{\beta(a)_i}(a)) \\ &+& d(X^{\beta(a)_i}(a),X^{\beta(a)(t)}(a)) \\
&\leq& \sup_{z\in M, |u|\leq\eta}d(z,X^u(z))+\eta+\sup_{z\in M, |u|\leq\eta}(z,X^u(z)) \\
&<& \frac{\delta}{2}\,.
\end{eqnarray*}
Therefore,
\begin{eqnarray*} \diam(\mathcal{X}_\beta^t(A))< 2\frac{\delta}{2}=\delta,\quad \forall t\, \in\mathbb{R},
\end{eqnarray*}
and since $X^t$ is a $N$-expansive flow there exists $B\subset A$ with at most $N$ points such that $A\subset X^{(-\epsilon,\epsilon)}(B)$.
\vspace{0.2cm}

$(3)\Rightarrow (1)$: Let $\epsilon>0$ be given and $\delta>0$ as in item $(3)$. Suppose $A\subset M$ is compact such that for some $\alpha\in \cH(A)$ it holds $\diam(\mathcal{X}^t_\alpha(A))<\delta$ $\forall t \in\mathbb{R}$. By induction, define $\beta\in \SQ^*(A)$ by $\beta(x)_0=0$ $\forall x\in A$. Since $A$ is compact there is $t_1>0$ such that $\alpha(x)(t_1)<\delta$ $\forall x\in A$. Define $\beta(x)_1=\alpha(x)(t_1)$. There is $t_{i+1}>t_i$ such that $|\alpha(x)(t_{i+1})-\alpha(x)(t_i)|<\delta$ $\forall x\in A$, define then $\beta(x)_{i+1}=\alpha(x)(t_{i+1})$. We can define $\beta(x)_i$ for $i<0$ similarly. Therefore $\beta\in \cH(A)$, with $x_\beta=x_\alpha$. By item $(3)$  there exists $B\subset A$ with at most $N$ points such that $A\subset X^{(-\epsilon,\epsilon)}(B)$. \vspace{0.2cm}    

$(1)\Rightarrow (2)$: Since $M$ is compact, $\forall \eta>0$ there is $\epsilon>0$ such that $X^{(-\epsilon,\epsilon)}(x)\subset B_\eta(x)$ $\forall x\in M$.\vspace{0.2cm}

$(2)\Rightarrow(1)$:  Since $X^t$  has no fixed points, $\forall\epsilon>0$ there is $\eta>0$ so that $\diam(X^{(-\epsilon,\epsilon)}(x))>\eta$ $\forall x\in M$. 
\end{proof}

Next we prove Theorem \ref{t-Nimplica-cw}.

\begin{proof}[Proof of Theorem \ref{t-Nimplica-cw}] Assume that the flow $X^t$ is not $cw$-expansive, i.e., there is $\epsilon_0>0$ such that for each $\delta>0$ there is a continuum $A_{\delta}$ and $\alpha\in \cH(A_\delta)$ with
$$\diam(\mathcal{X}^t_\alpha(A_\delta) )<\delta, \ \ \forall t\in\mathbb{R},$$
but $A_\delta$ is not a subset of $X^{(-\epsilon_0,\epsilon_0)}(x_\alpha)$.
If $X^t$ is $N$-expansive, then we can take $\delta>0$ from the definition of $N$-expansive flow for $$\epsilon=\frac{\epsilon_0}{2N}.$$
Let $A=A_\delta$ be as above
and take $B\subset A$, with cardinality less than $N+1$ such that $A\subset X^{(-\epsilon,\epsilon)}(B)$.
From Lemma \ref{fixedpoint} we can assume that there are not fixed points.
Therefore, taking a smaller $\delta$ if needed, we can suppose that
$A$ is contained in a flow box.
This and the fact that $B$ is finite implies that $A$ is a continuum piece of orbit.
On the one hand, since $A$ is a continuum and $x_\alpha\in A$, we have that there is no continuum piece of orbit with time interval $(0,\epsilon_0)$ containing $A$. On the other hand, a union of $N$ pieces of orbits with time intervals $$(-\epsilon,\epsilon)=(\frac{-\epsilon_0}{2N},\frac{\epsilon_0}{2N})$$ cannot form an interval with time interval longer than $(0,\epsilon_0)$. This contradiction proves the result.
\end{proof}

Recall that if $M_1$ and $M_2$ are metric spaces,
the flows $X^t: M_1 \to M_1$ and $Y^t:M_2 \to M_2$ are \emph{conjugate} if there is a homeomorphism  $h:M_1\to M_2$ mapping orbits of $X^t$ onto orbits of ~$Y^t$.
Next we prove Theorem \ref{A}, establishing  that $N$-expansivity is a conjugacy invariant.

\begin{proof}[Proof of Theorem \ref{A}]
Suppose $X^t$ and $Y^t$ are conjugate with $Y^t$ $N$-expansive. Let $h:M_1 \rightarrow M_2$ be a homeomorphism conjugating $X^t$ to $Y^t$. Let $\epsilon_1>0$ be given and $\epsilon_2>0$ such that if $x,y\in M_2$ satisfies $d(x,y)<\epsilon_2$, then $d(h^{-1}(x),h^{-1}(y))<\epsilon_1$. Let $\delta_2>0$ be the corresponding number given by $N$-expansivity of $Y^t$ to $\epsilon_2$ and   $\delta_1>0$ such that if $x,y\in M_1$ with $d(x,y)<\delta_1$, then $d(h(x),h(y))<\delta_2$. Therefore, if $A_1\subset M_1$ is a compact set and $\alpha_1\in \cH(A_1)$ is such that $\diam(\mathcal{X}^t_{\alpha_1}(A_1))<\delta_1$ $\forall t\in\mathbb{R},$ we get that $A_2=h(A_1)\subset M_2$ is a compact set. 
For every $x\in A_1$ and $t\in\mathbb{R}$ let $\alpha_2(h(x))(t)$ be the real number such that if
$$h(X^{\alpha_1(x)(t)}(x))=Y^{\alpha_2(h(x))(t)}(h(x))$$
then $\alpha_2 \in \cH(A_2)$. 
Furthermore, for every $t\in\mathbb{R}$ we have:
\begin{eqnarray*} \diam(\mathcal{Y}^t_{\alpha_2}(A_2))&=&\max_{x,y\in A_2}d(Y^{\alpha_2(x)(t)}(x),Y^{\alpha_2(y)(t)}(y)) \\
&=& \max_{x,y\in A_1}d(h(X^{\alpha_1(x)(t))}(x)),h(X^{\alpha_1(y)(t))}(y))) \\
&=& \diam(h(\mathcal{X}^t_{\alpha_1}(A_1))) <\delta_2,\\
\end{eqnarray*}
because  $\diam(\mathcal{X}^t_{\alpha_1}(A_1))<\delta_1$. 
Since $Y^t$ is $N$-expansive, $A_1$ is contained in at most $N \,\, Y^t$-orbit segments inside $B_{\epsilon_2}(x_{\alpha_2})$. But $A_1=h^{-1}(A_2)$, $x_{\alpha_2}=h(x_{\alpha_1})$ and the choice of $\epsilon_2$ imply that $A_1$ is contained in at most $N \,\, X^t$-orbit segments inside $B_{\epsilon_1}(x_{\alpha_1})$, proving that $X^t$ is $N$-expansive.   This finishes the proof.
\end{proof}

\section{Proof of Theorems \ref{t-Nimplica-cw} and \ref{teoA}}\label{section2}

Let $(M,d)$ be a compact metric space and
 $f:M\rightarrow M$ a homeomorphism. Let $k:M\rightarrow\mathbb{R}^+$ be a continuous map. 

\begin{defi}The \emph{suspension} of $f$ under $k$ is the flow $X^t$ on the space 
$$M_k=\bigcup_{0\leq t\leq k(y)}
(y,t)/(y,k(y))\sim(f(y),0)$$
defined for small nonnegative time by $X^t(y,s)=(t+s)$, $0\leq t+s< k(y)$. 
\end{defi}
Each suspension of $f$ is conjugate to the suspension of $f$ under $1$, the constant function with value $1$. For this reason we shall concentrate on suspensions under the constant map  $k(x)=1,\,\,\forall x\in M$.

Next, following \cite{BW}, we define a metric on $M_1$. Suppose that the diameter of $M$ under $d$ is less than $1$. 
Consider the subset $M\times\{t\}$ of $M\times[0,1]$ and let $d_t$ denote the metric defined by 
$$d_t((y,t),(z,t))=(1-t)d(y,t)+td(f(y),f(z)), \quad y,z\in M.$$ 
Given $x_1,x_2\in M_1$, consider all finite chains $x_1=w_0,w_1,...,w_n=x_2$ between $x_1$ and $x_2$ where, for each $i$, either $w_i$ and $w_{i+1}$ belong to $M\times{t}$ for some $t$ (in wich case we call $[w_i,w_{i+1}]$ a horizontal segment) or $w_i$ and $w_{i+1}$ are on the same orbit (and then we call $[w_i,w_{i+1}]$ a vertical segment). \\

\begin{figure}[htb]
\begin{center}
\includegraphics[height=3.5cm]{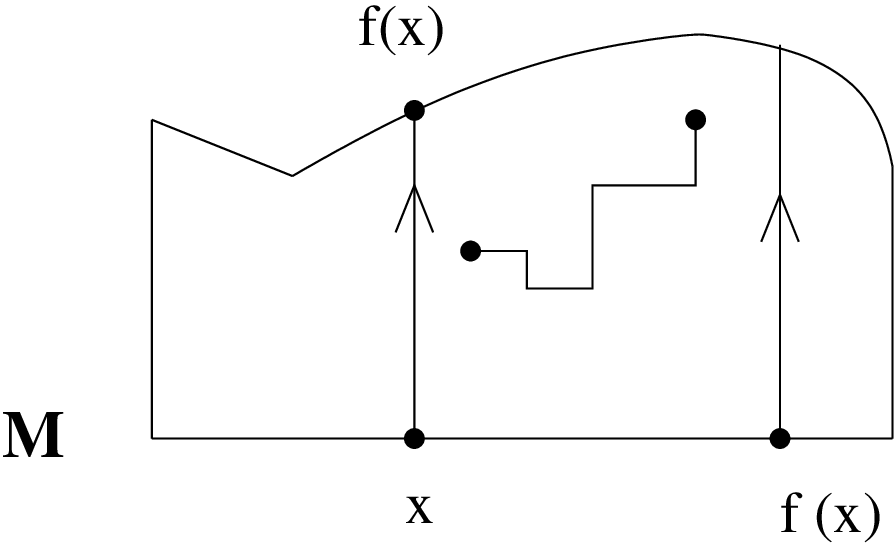}
\caption{Length of a segment in the metric $d_t$} \label{figura2}
\end{center}
\end{figure}

Define the length of a chain as the sum of the lengths of its segments, where the length of a horizontal segment $[w_i,w_{i+1}]$ is measured in the metric $d_t$ if $w_i$ and $w_{i+1}$ belongs to $M\times\{t\}$, and the length of a vertical segment $[w_i,w_{i+1}]$ is the shortest distance between $w_i$ and $w_{i+1}$ along the orbit  using the usual metric on $\mathbb{R}$. See Figure~\ref{figura2}.

If $w_i\neq w_{i+1}$ and $w_i$ and $w_{i+1}$ are on the same orbit and on the same set $M\times\{t\}$ then the length of the segment $[w_i,w_{i+1}]$ is taken as $d_t(w_i,w_{i+1})$, since this is always less than $1$. 

Then define $d(x_1,x_2)$ to be the infimum of the lengths of all chains between $x_1$ and $x_2$. It is easy to see that $d$ is a metric on $M_1$.
This metric $d$ gives the topology on $M_1$.

\begin{thm}\label{th-sus} Let $\phi:M\rightarrow M$ be a homeomorphism and $f:M\rightarrow\mathbb{R}^+$ a continuous map. The suspension of $\phi$ under $f$ is a $N$-expansive flow if and only if $\phi$ is a $N$-expansive homeomorphism.
\end{thm}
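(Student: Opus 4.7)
My plan follows the template of Lemmas \ref{th-susexp} and \ref{th-susCW}. Since the suspension of $\phi$ under any continuous positive $f$ is flow-conjugate to the suspension under the constant function $1$, and by Theorem \ref{A} $N$-expansiveness is a conjugacy invariant, I may assume $f\equiv 1$. Then $\Sigma=Y\times\{0\}$ is a global cross section transverse to $X^t$ with constant return time $1$ and first-return map $\phi$, and every $p\in M_1$ has a unique decomposition $p=X^{\tau(p)}(\pi(p))$ with $\pi(p)\in\Sigma$ and $\tau(p)\in[0,1)$.

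For the direction ``$X^t$ is $N$-expansive $\Rightarrow$ $\phi$ is $N$-expansive'', fix $\epsilon>0$, pick $\epsilon'<1/2$, and let $\delta'$ be the flow's $N$-expansive constant for $\epsilon'$. From the Bowen--Walters metric one has the bound $\diam_{M_1}(X^t(A))\leq(1-\{t\})\,\diam_Y(\phi^{\lfloor t\rfloor}(A))+\{t\}\,\diam_Y(\phi^{\lfloor t\rfloor+1}(A))$ for any $A\subset\Sigma$, so a sufficiently small $\delta<\delta'$ forces $\diam(\mathcal{X}^t_\alpha(A))<\delta'$ for the trivial choice $\alpha(x)(t)\equiv t\in\cH(A)$, whenever $\diam_Y(\phi^n(A))<\delta$ for every $n\in\ZZ$. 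Flow $N$-expansiveness then yields $B\subset A$ with $|B|\leq N$ and $A\subset X^{(-\epsilon',\epsilon')}(B)$. Because $\epsilon'<1/2$ is less than half the return time, $X^{(-\epsilon',\epsilon')}(b)\cap\Sigma=\{b\}$ for each $b\in\Sigma$, so $A\subset B$ and hence $|A|\leq N$, which is the desired $N$-expansiveness of $\phi$.

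For the reverse direction, let $\delta_\phi$ be an $N$-expansiveness constant for $\phi$. Given $\epsilon\in(0,1/4)$, I seek $\delta>0$. Suppose $A\subset M_1$ and $\alpha\in\cH(A)$ satisfy $\diam(\mathcal{X}^t_\alpha(A))<\delta$ for every $t$. After a harmless small time shift I arrange $x_\alpha\in\Sigma$, so that $X^n(x_\alpha)=\phi^n(x_\alpha)\in\Sigma$ for all $n\in\ZZ$. For $\delta$ small the hypothesis forces each $X^{\alpha(x)(n)}(x)$ into a thin tubular neighborhood of $\Sigma$, giving a well-defined projection $\pi(X^{\alpha(x)(n)}(x))\in Y$. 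Continuity of $\alpha(\cdot)(t)$ in $x$ combined with $\alpha(x_\alpha)=\operatorname{id}$ forces, by an open-closed argument on $A$, the integer $m_n(x)$ with $\pi(X^{\alpha(x)(n)}(x))=\phi^{m_n(x)}(\pi(x))$ to equal $n$ uniformly in $x\in A$ and $n\in\ZZ$. Then $d_Y(\phi^n(\pi(x)),\phi^n(x_\alpha))<\delta'$ for some $\delta'\to 0$ as $\delta\to 0$; shrinking $\delta$ so that $2\delta'<\delta_\phi$, the $N$-expansiveness of $\phi$ gives $|\pi(A)|\leq N$. Choosing $B\subset A$ with $\pi|_B$ bijective onto $\pi(A)$, each $x\in A$ lies on the $X^t$-orbit of some $b\in B$, and the orbit-time separation is bounded by $\diam_{M_1}(A)<\delta$; a final shrinking of $\delta$ guarantees $A\subset X^{(-\epsilon,\epsilon)}(B)$.

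The principal obstacle is the reverse direction: one must rule out the possibility that the continuous reparametrization $\alpha$ identifies $X^{\alpha(x)(n)}(x)$ with $\phi^{m_n(x)}(\pi(x))$ for some $m_n(x)\neq n$ on a nonempty subset of $A$. The open-closed argument resting on continuity of $x\mapsto\alpha(x)(t)$ and the normalization $\alpha(x_\alpha)=\operatorname{id}$ is the technical heart of the proof; once it is in place the remainder is a routine adaptation of the cross-section arguments of \cite{BW} and \cite{ACP}.
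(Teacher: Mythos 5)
Your overall architecture matches the paper's: reduce to $f\equiv 1$, prove the forward direction by feeding the trivial reparametrization $\alpha(x)(t)=t$ into the flow's $N$-expansiveness and using $\epsilon'<1/2$ (less than half the return time) to force $B=A$, and prove the reverse direction by projecting $A$ to the cross-section $\Sigma=Y\times\{0\}$ and invoking the $N$-expansiveness of $\phi$. The forward direction is fine and is essentially the paper's computation with the Bowen--Walters metric.

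The problem is precisely at the step you yourself single out as the technical heart. The set $A$ in Definition \ref{def-n} is an \emph{arbitrary} subset of $M$ --- and in the situations that distinguish $N$-expansiveness from cw-expansiveness it is typically a finite set of at most $N+1$ points. For such $A$ every subset is both open and closed, so an ``open-closed argument on $A$'' resting on continuity of $x\mapsto\alpha(x)(t)$ proves nothing: continuity across a discrete set is vacuous, and the normalization $\alpha(x_\alpha)=\operatorname{id}$ constrains only the single point $x_\alpha$. The synchronization $m_n(x)=n$ therefore cannot be propagated from $x_\alpha$ to the other points of $A$ by connectedness. What actually works --- and is closer to what the paper does by inspecting the representation $(\phi(y),s)$ of $X^{\alpha(y)(1)}(y)$ --- is a \emph{per-point} argument in $t$: for each fixed $x\in A$, $\alpha(x)\in\Hom(\RR,0)$ is monotone, the path $t\mapsto X^{\alpha(x)(t)}(x)$ stays $\delta$-close to $X^t(x_\alpha)$, and the height coordinate of $X^t(x_\alpha)$ winds exactly once through $[0,1)$ on each interval $[n,n+1]$; since two points of $M_1$ at $\delta$-distance have height coordinates within $\delta$ of each other (mod $1$), the companion path crosses $\Sigma$ exactly once per such interval, giving $m_{n+1}(x)-m_n(x)=1$ and hence $m_n(x)=n$. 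With that substitution your reverse direction goes through, though your final step ``$A\subset X^{(-\epsilon,\epsilon)}(B)$'' still deserves a sentence ruling out two points of $A$ with the same projection but orbit-time separation close to $1$ (small $d_{M_1}$-distance between points on one fiber does not by itself bound the orbit-time gap when the $\phi$-orbit nearly returns to itself); this is handled by noting that such a configuration would already violate the $\delta$-closeness of the reparametrized orbits at intermediate times.
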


\begin{proof} We need only show the result when $f\equiv 1$.
Suppose that $X^t$ is $N$-expansive. Let $\frac{1}{2}>\epsilon>0$ be given and $\delta>0$ be the corresponding constant determined by $N$-expansivity property. If $A\subset M$ is a subset with $\diam(\phi^n(A))<\delta$ $\forall n\in\mathbb{Z}$, denoting  $A_M=A\times\{0\}$, and $x_1=(x,0)$, then for all $t\in\mathbb{R}$ we have:
\begin{eqnarray*} \diam(X^t(A_M))&=&\max_{x_1,y_1\in A_M}d(X^t(x_1),X^t(y_1)) \\
&\leq& \max_{x,y\in A}\rho_{t-[t]}((\phi^{[t]}(x),t-{[t]}),(\phi^{[t]}(y),t-{[t]})) \\
&=& \max_{x,y\in A}((1-t+[t])\rho(\phi^{[t]}(x),\phi^{[t]}(y))+(t-[t])\rho(\phi^{[t]+1}(x),\phi^{[t]+1}(y))).
\end{eqnarray*}  
Where $[t]$ is the greatest integer less than $t$. But since for all $x,y\in A$ it holds
$$\phi^{[t]}(x),\phi^{[t]}(y)\in\phi^{[t]}(A) \ \mbox{and} \ \phi^{[t]+1}(x),\phi^{[t]}(y)\in\phi^{[t]+1}(A)\, ,$$
we get
\begin{eqnarray*} \diam(X^t(A_M))&\leq&(1-t+[t])\delta+(t-[t])\delta=\delta. 
\end{eqnarray*}
Since $X^t$ is $N$-expansive, there is $B_M\subset A_M$ with at most $N$ points such that $A_M\subset X^{(-\epsilon,\epsilon)}(B_M)$. Moreover since $0<\epsilon<\frac{1}{2}$ and $A_M\subset M\times\{0\}$ we obtain $A_M=B_M$, and so $A=\{x; (x,0)\in A_M\}$. Therefore, $\phi$ is $N$-expansive.

Next suppose that the homeomorphism $\phi$ is $N$-expansive. Consider in $M$ the metric given by
$$\rho'(x,y)=\min\{\rho(x,y),\rho(\phi(x),\phi(y))\}$$
and let $\delta>0$ be the $N$-expansivity constant to $\rho'$. Let $\epsilon>0$ and $\delta'=\min\{2\delta,\epsilon,\frac{1}{4}\}$. 
Let $A\subset M_f$ and $\alpha\in \cH(A)$ be such that $\diam\mathcal{X}^t_\alpha(A)<\delta'$ $\forall t\in\mathbb{R}$. 
We consider two cases: (1)  Each point $x\in A$ can be represented as $(y_x,1/2)$ and (2) when (1) not happens.

In the first case, define $A_M=\{a\in M;(a,1/2)\in A \}$. Then
\begin{eqnarray*}\diam(A_M)&=&\max_{y,z\in A_M}\rho'(y,z) \\
&=& \max_{(y,0),(z,0)\in A}d((y,0),(z,0)) \\
&=&\diam(A)<\delta'<\delta.
\end{eqnarray*}  
By definition of suspension, for each $x\in A$ we have that $X^{1}(x)$ has representation $(\phi(y_x),1/2)$, and since $\diam\mathcal{X}^1_\alpha(A)<\delta<\frac{1}{4}$ we get that $X^{\alpha(y)(1)}(y)$ has representation $(\phi(y),s)$ with $s\in(0,1)$ $\forall y\in A$. So,
\begin{eqnarray*}\diam(\phi(A_M))&=&\max_{y,z\in A_M}\rho'(\phi(y),\phi(z)) \\
&\leq& \max_{(\phi(y),s),(\phi(z),r)\in \mathcal{X}_\alpha^1(A)}d((\phi(y),s),(\phi(z),r)) \\
&=&\diam(\mathcal{X}^1_\alpha(A))<\delta'<\delta.
\end{eqnarray*}
Similarly, $\diam(\phi^n(A_M))<\delta$ $\forall n\in\mathbb{Z}$. Since 
$\phi$ is $N$-expansive  $A_M=\{y_1,...,y_j\}$ with $j\leq n$, and so, every point in $A$ has the form $(y_i,0)$ with $1\leq i\leq j$. Finishing the proof in the first case.
\vspace{0.2cm}

For the second case, for each point $x\in A$ there is $r_x$, with $|r_x|<\frac{1}{2}$, such that $X^{r_x}(x)$ has representation as $(y_x,\frac{1}{2})$. Define $\widetilde{A}=\{X^{r_x}(x);x\in A\}$. Define $\Theta$ the family of subsets $B\in A$ such that for each point $y\in\widetilde{A}$ there is exactly one point $x\in B$ such that $y=X^{r_x}(x)$. Since $A$ is connected, there is at least one $B\in\Theta$ such that $B$ is connected.  For each each $x\in B$ and $t\in\mathbb{R}$ set $$\widetilde{\alpha}(X^{r_x}(x))(t)=\alpha(x)(t+r_x)-\alpha(x)(r_x).$$
Then $\alpha\in\mathcal{H}(\widetilde{A})$ and for every $t\in\mathbb{R}$, it holds
$$\diam(\mathcal{X}^t_{\widetilde{\alpha}}(\widetilde{A}))<2\delta.$$
By the first case, we obtain that $\widetilde{A}=\{y_1,...,y_j\}$ with $j\leq n$, therefore $B$ has $j$ points and 
$$A\subset X^{(-\epsilon,\epsilon)}(B).$$

All together completes the proof of Theorem \ref{th-sus}.
\end{proof} 

\begin{proof}[Proof of Theorem \ref{teoA}]
In \cite[Theorem A]{CC} the authors  exhibit, to each $N\geq 2$, a $N$-expansive homeomorphism with the shadowing property, that is not $N-1$-expansive. In \cite[Theorem 5.1]{A} the author exhibits,   for each $N\geq 2$, a $N$-expansive $C^N$-diffeomorphism defined on a surface $S$ that is not $N-1$-expansive. We can apply the above theorems for these examples to find, for each $N\geq 2$, examples of $N$-expansive flows which are not $N-1$-expansive. Recall that for flows and homeomorphisms, $1$-expansivity is equivalent to expansivity. By \cite[Theorem 3.2]{ACP} and \cite[Theorem 6]{BW} we can use the suspensions of the these examples to find for each $N\geq 2$ a $N$-expansive flow that is not a $N-1$-expansive.

In \cite[section 2.2]{A2} the author shows that the \emph{pseudo Anosov with $1$-prong} is a $cw$-expansive homeomorphism, but is not a $N$-expansive homemomorphism for all $N\geq 1$. By theorem \ref{th-sus} and by \cite[Theorem 6]{BW} we get that the suspension of the pseudo Anosov with $1$-prong is $cw$-expansive flow, but is not $N$-expansive for all $N\geq 1$. 
\end{proof}

\section{Proof of  Theorem \ref{teoB}}\label{section3}

In this section our analysis of  a $N$-expansive flow $X^t$ on $M$
will be carried out relative to a fixed collection of cross-sections with special properties. 
We use the notation introduced by Keynes and Sears in \cite{KS} and start recalling the definition of $\delta$-adapted family of cross-sections. \vspace{0.1cm}

A set $S\subset M$ is a \emph{cross-section} of time $\epsilon>0$ if it is closed and for each $x\in S$ we have $S\cap X^{(-\epsilon,\epsilon)}(x)=\{x\}$. The \emph{interior} of $S$ is the set $S^*=int(X^{(-\epsilon,\epsilon)}(S))\cap S$.
The proof of the next lemma can be found in \cite[Lemma 2.4]{KS} as well in \cite[Lemma 7]{BW}.

\begin{lemma}\label{lst} Let $X^t$ be a continuous flow without fixed points. There is $\epsilon>0$ such that for each $\delta>0$ we can find a pair $(\mathcal{S},\mathcal{T})$ of finite families $\mathcal{S}=\{S_1,...,S_n\}$ and $\mathcal{T}=\{T_1,...,T_n\}$ of local cross-sections of time $\epsilon>0$ and diameter at most $\delta$ with $T_i\subset S_i^*$ ($i\in\{1,...,k\}$) such that
$$M=\bigcup_{i=1}^{k} X^{[0,\epsilon]}(T_i)=\bigcup_{i=1}^{k} X^{[-\epsilon,0]}(T_i)=\bigcup_{i=1}^{k} X^{[0,\epsilon]}(S_i)=\bigcup_{i=1}^{k} X^{[-\epsilon,0]}(S_i).$$   
\end{lemma}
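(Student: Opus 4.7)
The plan is to follow the classical flow-box construction for continuous fixed-point-free flows on compact metric spaces, due originally to Whitney and refined in \cite{BW} and \cite{KS}. I would organize the argument into four steps: secure a uniform cross-section time, build nested sections of small diameter at every point, extract a finite subcover, and verify the four one-sided covering equalities.

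First, I would use the absence of fixed points together with the compactness of $M$ to fix a uniform $\epsilon>0$ with the property that $d(x,X^{\pm t}(x))\geq 3\eta$ for every $x\in M$ and every $|t|\in[\epsilon,2\epsilon]$, for some $\eta>0$. This is standard: for each $x$, $d(x,X^t(x))>0$ for all small $t\neq 0$, and compactness of $M$ with continuity of the flow promotes this to a uniform lower bound on a small annulus of times. A Whitney-type transversal construction, valid for continuous flows, then produces at each $x$ a closed local cross-section through $x$ of time $\epsilon$ and diameter at most $\eta$.

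Second, given the prescribed $\delta>0$, I would replace $\eta$ by $\min(\eta,\delta/2)$ if necessary, and at each $x\in M$ construct a nested pair of closed local cross-sections $T_x\subset S_x^*$, both through $x$, both of time $\epsilon$ and diameter less than $\delta$, with $x$ in the relative interior of $T_x$ in $S_x$. The flow box $V_x:=X^{(-\epsilon/2,\,\epsilon/2)}(T_x^*)$ is then an open neighborhood of $x$ in $M$. By compactness of $M$, I would extract a finite subcover $V_{x_1},\ldots,V_{x_m}$ of $M$.

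Third, to secure simultaneously the forward and backward one-sided covering conditions demanded by the conclusion, I would augment the finite collection by adjoining the flow-translated sections $X^{\pm\epsilon/2}(T_{x_j})$ and $X^{\pm\epsilon/2}(S_{x_j})$. These are again closed local cross-sections of time $\epsilon$ and of the same diameter, since $X^{\pm\epsilon/2}$ is a homeomorphism of $M$ and preserves the transversality condition $\Sigma\cap X^{(-\epsilon,\epsilon)}(y)=\{y\}$. Relabel the augmented collection as $(S_i,T_i)_{i=1}^k$ with $T_i\subset S_i^*$, and each of diameter less than $\delta$.

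Finally, I would verify the four equalities. Given any $y\in M$, pick $j$ with $y\in V_{x_j}$, so $y=X^s(z)$ for some $z\in T_{x_j}^*$ and $s\in(-\epsilon/2,\,\epsilon/2)$. Writing $y=X^{s+\epsilon/2}\bigl(X^{-\epsilon/2}(z)\bigr)$, the exponent $s+\epsilon/2$ lies in $(0,\epsilon)$ while $X^{-\epsilon/2}(z)$ belongs to the adjoined section $X^{-\epsilon/2}(T_{x_j})$, so $y\in X^{[0,\epsilon]}(T_i)$ for the corresponding index $i$. The mirror decomposition $y=X^{s-\epsilon/2}\bigl(X^{\epsilon/2}(z)\bigr)$ yields $y\in X^{[-\epsilon,0]}(T_{i'})$, and the same two decompositions applied to $S_{x_j}$ produce the remaining two equalities. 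The main obstacle is the initial Whitney-type construction of closed local cross-sections of uniform time and controllable diameter for a flow that is only continuous; once this building block is in place, the compactness argument and the translation-of-sections trick make the four covering conditions routine.
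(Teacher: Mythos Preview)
The paper does not supply its own proof of this lemma; it simply refers the reader to \cite[Lemma 2.4]{KS} and \cite[Lemma 7]{BW}. Your four-step outline is precisely the classical argument carried out in those references: Whitney's local section theorem for fixed-point-free continuous flows, nested flow-box pairs $T_x\subset S_x^*$, a compactness extraction, and the flow-translation trick to upgrade a two-sided cover by the boxes $X^{(-\epsilon/2,\epsilon/2)}(T_{x_j}^*)$ to the required one-sided covers. So your approach matches what the paper invokes.

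One small correction worth making: the translated sections $X^{\pm\epsilon/2}(S_{x_j})$ and $X^{\pm\epsilon/2}(T_{x_j})$ need not have literally ``the same diameter'' as the originals, since $X^{\pm\epsilon/2}$ is a homeomorphism but not an isometry. To keep the final family below diameter $\delta$, you should begin with sections of diameter less than some $\delta'<\delta$ chosen, via the uniform continuity of $X^{\pm\epsilon/2}$ on the compact space $M$, so that their images under $X^{\pm\epsilon/2}$ still have diameter less than $\delta$. With that adjustment your sketch is sound, and your verification that the nesting $T_i\subset S_i^*$ and the time-$\epsilon$ transversality survive under $X^{\pm\epsilon/2}$ is correct.
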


By lemma \ref{fixedpoint} we can consider only expansive flows without fixed points.

\begin{defi}\label{adequada}
A pair of families of cross-sections $(\mathcal{S},\mathcal{T})$  as in the previous lemma is called {\emph{$\delta$-adequate}}.
\end{defi}
Given a  pair of   $\delta$-adequate cross sections $(\mathcal{S},\mathcal{T})$ let

\begin{equation}\label{e.theta} 
\theta=\sup\{\delta>0; \forall x\in\bigcup_{i=1}^{k}S_i \ \mbox{it holds} \ X^{(0,\delta)}(x)\cap\bigcup_{i=1}^{k}S_i=\emptyset\}\,.
\end{equation}

Let $\rho>0$ satisfying $5\rho<\epsilon$ and $2\rho<\theta$. And for each $S_i$ consider $D_\rho^i=X^{(-\rho,\rho)}(S_i)$ and define the projection 
\begin{equation}\label{e.projecao} P_\rho^i:D_\rho^i\rightarrow S_i
\end{equation}
by $P_\rho^i(x)=X^t(x)$, were $X^t(x)\in S_i$ for $|t|<\rho$. Let $\frac{1}{2}\theta>\epsilon_0>0$ be such that if $x,y\in S_i$, $d(x,y)<\epsilon_0$ and $t$ is a real number with $|t|<3\delta$ and $X^t(x)\in T_j$, then $X^t(y)\in D^j_\rho$.

Let  $\phi:\bigcup_{i=1}^k T_i\to \bigcup_{i=1}^k T_i$ be  the
{\em{ first return map}} defined as $\phi(x)=X^t(x)$ where $t>0$ is the smallest positive number such that $X^t(x)\in \bigcup_{i=1}^k T_i$. Note that $t\in[\theta,\epsilon]$.

If $x\in T_i$ and $y\in S_i$ with $d(x,y)<\epsilon_0$ let $\{y^x_0,...,y^x_n\}\subset \cO_X(y)$ such that $y^x_0=y$ and $y^x_j=P_\rho^l(X^t(y^x_{j-1}))$, where $t>0$ is the smallest positive time such that $\phi^j(x)=X^t(\phi^{j-1}(x))$, and $l$ is such that $\phi^j(x)\in T_l$. We can continue this construction while $d(\phi^j(x),y_j)<\epsilon_0$. Similarly to $j<0$. See Figure \ref{fig3}.

The stable and unstable sets of points is defined in the following way.
If $x\in T_i$ and $\eta<\epsilon_0$, the \emph{$\eta$-stable set} of $x$ is
is defined as
$$ W^s_\eta(x)=\{y\in S_i; d(\phi^i(x),y_i)<\eta \ \forall i\geq 0\}$$       
and the \emph{$\eta$-unstable set} of $x$ is defined as
$$ W^u_\eta(x)=\{y\in S_i; d(\phi^i(x),y_i)<\eta \ \forall i\leq 0\}\,.$$

\begin{figure}[htb]
\begin{center}
\includegraphics[height=4.5cm]{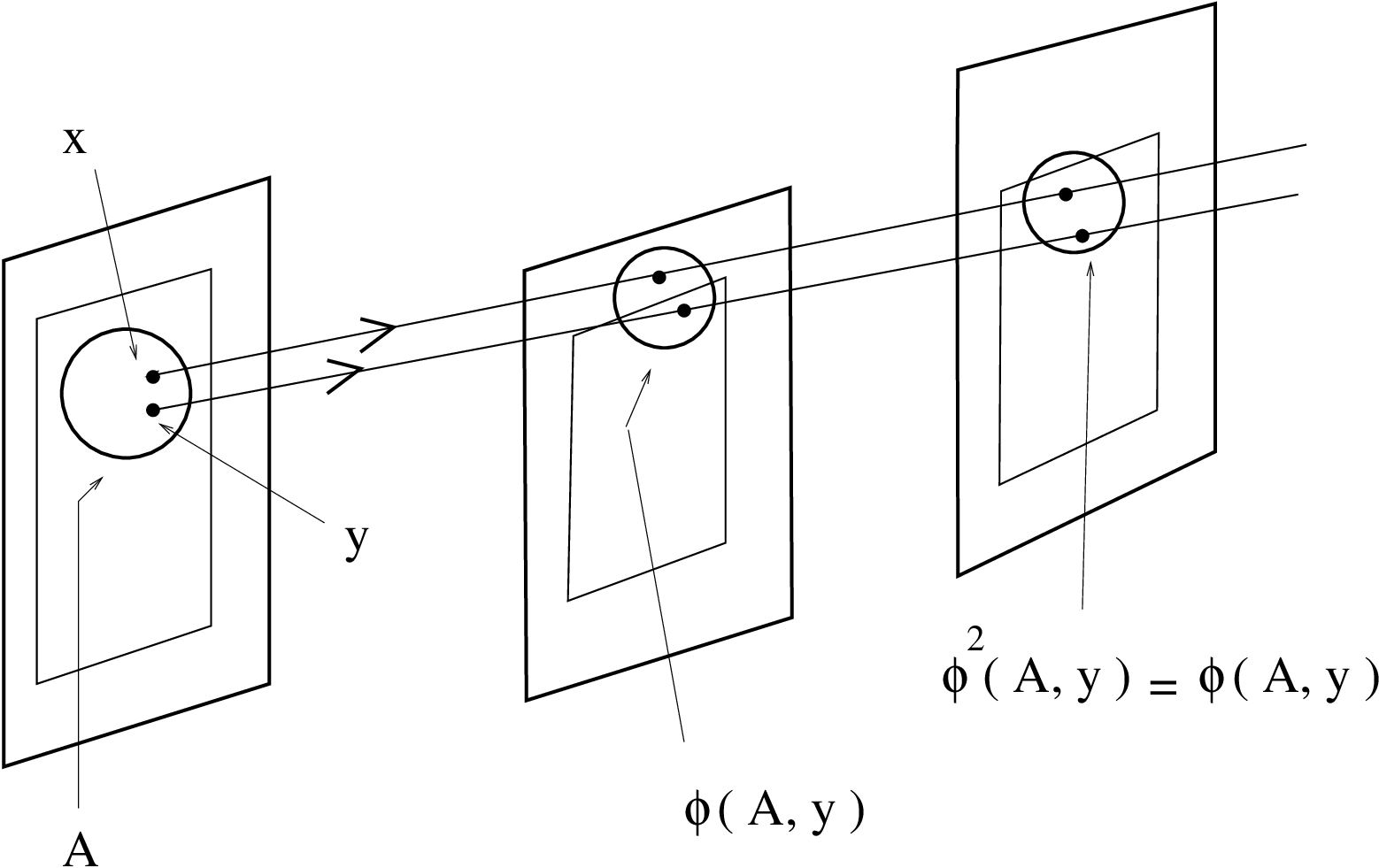}
\caption{The first return map $\phi:\bigcup_{i=1}^k T_i\to \bigcup_{i=1}^k T_i$ } \label{fig3}
\end{center}
\end{figure}

\bigskip
Theorem 2.7 in \cite{KS} establishes that a 
 flow $X^t$ is expansive if, and only if, given a pair $(\mathcal{S},\mathcal{T})$ $\delta$-adequate, there is an $\eta>0$ such that  $W^s_\eta(x)\cap W^u_\eta(x)=\{x\}$ for every $ x\in\bigcup_{i=1}^kT_i$,.  

\begin{proof}[Proof of Theorem \ref{teoB}]
Suppose that $X^t$ is $N$-expansive. Let $a\in(0,\epsilon)$ be given and $a_1>0$ be the $N$-expansive constant corresponding to $a$. Let $\eta\in(0,\epsilon_0)$ be such that if $p\in T_i$ and $q\in S_i$, $i\in\{1,...,k\}$, with $d(p,q)<\eta$, then $d(X^t(p),X^s(q))<\frac{a_1}{2}$ where, if $X^{t_1}(p)=\phi(p)$ and $X^{s_1}(q)=q_1$, then $t\in[0,t_1]$ and $s\in[0,s_1]$ and $|s-t|\leq |s_1-t_1|$. Suppose $y\neq x, y\in W^s_\eta(x)\cap W^u_\eta(x)$ and $\{t_j\}$ and $\{s_j\}$ are the increasing bisequences suh that $X^{t_j}(x)=\phi^j(x)$ and $X^{s_j}(y)=y_j^x$. Define a piecewise linear function $$h:\mathbb{R}\rightarrow\mathbb{R}$$ such that $h(t_j)=s_j$. Now for each $t\in\mathbb{R}$, $t=t_j+u$ and $h(t)=s_j+\widetilde{u}$, where $0\leq u\leq t_{j+1}-t_j$, $0\leq\widetilde{u}\leq s_{j+1}-s_j$ and
$$|u-\widetilde{u}|\leq |(t_{j+1}-t_j)-(s_{j+1}-s_j)|$$
for some $j$, and by the choice of $\eta$, we have 
$$d(X^t(x),X^{h(t)}(y))=d(X^{u}(\phi^j(x)),X^{\widetilde{u}}(y^x_j))<\frac{a_1}{2}.$$
Since every point in $W^s_\eta(x)\cap W^u_\eta(x)$ lie in the same $S_i$, if $y_1\neq y_2$ with $$y_1,y_2 \in W^s_\eta(x)\cap W^u_\eta(x)$$ then $y_1\notin X^{(-a,a)}(y_2)$. Therefore, by the $N$-expansivity of $X^t$, $W^s_\eta(x)\cap W^u_\eta(x)$ has at most $N$ points.

To prove the reverse implication, suppose that given a pair $(\mathcal{T},\mathcal{S})$ and $\rho>0$, there is $\eta>0$ such that 
$$\#W_\eta^s(x)\cap W_\eta^u(x)\leq N.$$

Let $h:\mathbb{R}\rightarrow\mathbb{R}$ be a continuous function with $h(0)=0$. Let $x\in T_i$ and $y\in S_i$, for some $i$, and $\{t_j\}$ and $\{s_j\}$ be such that $X^{t_j}(x)=\phi^j(x)$ and $X^{s_n}(y)=y^x_j$. 
Choose $\delta_0\in(0,\epsilon-\delta-\rho)$ and positive numbers, $a_2<a_1<\eta$, and $a_3,a_4>0$ such that if $u\in T_i$ and $v\in S_i$ then it holds
\begin{enumerate}
\item $d(u,v)<a_1$ implies $d(u,X^t(v))>a_1$ for all $|t|\in[\delta_0,\epsilon]$;
\item $d(u,v)<a_2$ implies $d(\phi(u),v^u_1)<a_1$;
\item $d(u,v)\geq a_2$ implies $d(u,X^t(v))>a_3$ for all $|t|<\delta$;
\item If $x,y\in M$ and $d(x,y)<a_4$ then $d(X^t(x),X^t(y))<a_1$ for all $|t|<\delta(\alpha)$.
\end{enumerate}   

We shall prove that if $\epsilon'=\min(a_2,a_3,a_4)$, $x\in T_i$, $y\in\bigcup_{i=1}^kS_i-W_\eta^s(x)\cap W_\eta^u(x)$, and $h:\mathbb{R}\rightarrow\mathbb{R}$, with $h(0)=0$, then for some $t\in\mathbb{R}$, we have $$d(X^t(x),X^{h(t)}(y))>\epsilon'.$$

\begin{description}
\item[(a)] Suppose that for each $j\in\mathbb{Z}$ we have  
$$|h(t_i)-s_i|<\delta.$$
By hypothesis, there is $j\in\mathbb{Z}$ such that $d(\phi^j(x),y^x_j)>\eta$. Therefore, $$d(X^{t_j}(x),X^{s_j}(y))>\eta>a_1>a_2.$$ By $(3)$ we have $$d(X^{t_j}(x),X^{h(s_j)}(y))>a_3.$$

\item[(b)] Suppose that $j\in\mathbb{Z}$ is the integer with smallest modulus such that   
$$|h(t_i)-s_i|\geq\delta.$$
We can assume $j>0$. 
\item[(b.1)] Suppose there is $i\in [0,j)$ such that
$$ d(\phi^i(x),y_i)>a_2.$$
Reasoning as in (a),  we get
$$d(X^{t_j}(x),X^{h(s_j)}(y))>a_3.$$
\item[(b.2)] Suppose that for all $i\in [0,j)$ we have 
$$ d(\phi^i(x),y_i)\leq a_2<a_1.$$
\item[(b.2.1)] Suppose $t=s_j-h(t_j)\geq\delta_0$. If $h(t_j)\geq s_{j-1}-\delta_0$, then
$$\delta\leq t\leq s_j-s_{j-1}+\delta_0<\delta+\rho+\delta_0<\epsilon$$
and by $(1)$,
$$d(X^{t_j}(x),X^{h(t_j)}(y) = d(X^{t_j}(x),X^{s_j-t}(y))>a_1.$$

But if $h(t_j)<s_{j-1}-\delta_0$, since $h(t_j)>s_{j-1}-\delta_0$ we can find $t'\in (t_{j-1},t_j)$ such that $h(t')=s_{j-1}-\delta_0$. Let $\zeta=t'-t_{j-1}>0$. Then,
$$d(X^{t_{j-1}}(x),X^{s_{j-1}}(y))<a_2<a_1.$$
 $(1)$ implies that
$$d(X^{t_{j-1}}(x),X^{s_{j-1}-\delta_0-\zeta}(y)>a_1.$$  
Therefore, by $(4),$ 
$$ d(X^{t'}(x),X^{h(t')}(y))=d(X^{t_{j-1}+\zeta}(x),X^{s_{j-1}-\delta_0}(y)))>a_4.$$

\item[(b.2.2)] Suppose $t=h(t_j)-s_j\geq\delta_0$. Thus $h(t_j)\geq s_j+\delta_0$ and $h(t_{j-1})\leq s_{j-1}+\delta_0<s_j+\delta_0$ and therefore there is $t'\in (t_{j-1},t_j]$ and $h(t')=s_j+\delta_0$. Let $\zeta=t_j-t'$. Then $d(X^{t_j}(x),X^{s_j}(y))<a_1$ $(1)$ implies $$d(X^{t_j}(x),X^{\delta_0+\zeta}(X^{s_j}(y)))>a_1.$$
Hence 
$$d(X^{t'}(x),X^{h(t')}(y))=d(X^{t_j-\zeta}(x),X^{s_j+\delta_0})>a_4,$$
by $(4)$.
\end{description}

Now assume that $x$ and $y$ are arbitrary points of $M$. Choose $\delta_1>0$ and $a_5>0$ such that $d(x,y)<a_5$ implies
\begin{enumerate}
\item $d(X^t(x),X^s(y))<\epsilon'$, where $t$ is the smallest positive number such that $$X^t(x)\in\bigcup_{i=1}^kT_i, \ X^s(y)=P_\rho(X^t(y)) \ \text{and} \ |t-s|<\frac{1}{16}\delta_1;$$
\item $d(X^w(x),X^v(y))<\epsilon'$, for $|w|,|v|\leq\delta_1+\delta$, and $|v-w|<\delta_1$.
Let $a_6>0$ be such that if $d(x,y)\leq a_6$ then $d(X^{t'}(x),X^{t'+t}(y))>a_6$ for $$\frac{1}{16}\delta_1\leq|t|\leq\zeta \ \text{and} \ |t'|<\delta.$$
Finally, take $a_5>a_8>a_7>0$ such that for each $z\in B_{a_7}(x)$ the set of all points in $B_{a_8}(z)$ which project onto $P_\rho(X^t(z))$ is contained in $X^{(-a_5,a_5)}(z)$, and $B_{a_9}(z)\subset B_{a_8}(x)$.
\end{enumerate}
Assume $d(x,y)<a_7$ and $y\notin X^{(-a_5,a_5)}(z)$, where $P_\rho(z)\in W^s_\eta(x)\cap W^u_\eta(x)$. 
\begin{description}
\item[(a)] Suppose $|h(t)-s|<\frac{1}{8}\delta_1$. Choose $\delta'>\frac{1}{4}\delta_1$ such that $|h(t+t')-s|<\frac{1}{4}\delta_1$, for each $|t'|<\delta'$. Define a homeomorphism $$\widetilde{h}:\mathbb{R}\rightarrow\mathbb{R}$$ by $\widetilde{h}(t)=h(t+t')-s$, for all $|t'|\geq\delta'$, $\widetilde{h}(0)=0$ and otherwise by linearity. Therefore, 
$$|\widetilde(t')-t'|\leq \frac{1}{4}\delta_1+\delta'<\frac{1}{2}\delta_1, \ |t'|<\delta'.$$
Then, by $(1)$ and $(2)$ 
$$d(X^{t+t'}(x),X^{s+\widetilde{h}(t')}(y))<\epsilon', \ |t'|<\delta'.$$
By the first part of the proof we can find $t_0\in\mathbb{R}$ such that $$d(X^{t_0}(X^t(x)),X^{\widetilde{h}(t_0)}(X^s(y)))>\epsilon'$$ and by the construction $|t_0|$ must be greater than $\delta'$, i.e.,
$$d(X^{t+t_0}(x),X^{h(t+t_0)}(y))>\epsilon'.$$
 	  
\item[(b)] If $|h(t)-s|\geq\frac{1}{8}\delta_1$ then $|h(t)-t|\geq\frac{1}{16}\delta_1$ and therefore, if we choose $t'\leq t$ such that $|h(t')-t'|=\frac{1}{16}\delta_1$, hence either
$$d(X^{t'}(x),X^{h(t')}(y))=d(X^{t'}(x),X^{t'\pm\frac{1}{16}\delta_1}(y))>a_6,$$
or $d(x,y)>a_6$.
 
Then $\min\{\epsilon',a_6,a_7\}$ is a $N$-expansive constant corresponding to $a_5$.\qedhere
\end{description}
\end{proof}

\section{Komuro N-expansivity on surfaces}
\label{secSingNexp}

In this section we consider Komuro $N$-expansive flows.
Let $\sing(X)$ denote the set of singular points of the flow.
To do so, we first prove two auxiliary lemmas:
\begin{lemma}\label{l-sing-finito}
If $X$ is Komuro $N$-expansive and $M$ is a compact space then $\sing(X)$ is a finite set.
\end{lemma}
\begin{proof}
The proof goes by contradiction.
Suppose that $\sing(X)$ has infinitely many points.
As $M$ is compact, for any $\delta>0$ we can take a subset
$A\subset \sing(X)$ with $N+1$ different points such that $\diam(A)<\delta$.
Thus, for any $\alpha\in \cH(A)$ we have
$\mathcal{X}^t_\alpha(A)=A$ and
$\diam(\mathcal{X}^t_\alpha(A) )<\delta$ for all $t\in\mathbb{R}$.
But, as $A$ has $N+1$ fixed points, there is no subset $B$ with at most $N$ points
such that
$\mathcal{X}^{t_0}_\alpha(A)\subset X^{(-\epsilon,\epsilon)}(B)$ and $X$ cannot be Komuro $N$-expansive.
This contradiction proves that Komuro $N$-expansive flows on compact spaces
have finitely many fixed points.
\end{proof}

\begin{lemma}\label{l-omega-tudo}
If a flow is  Komuro $N$-expansive on a compact surface $M$ then  $\Omega(X)=M$.
\end{lemma}
\begin{proof}
To prove this result we first recall that in \cite[Theorem 3.4]{A4}
 it is proved that if $X$ is a flow on a compact surface $M$
 with $\Omega(X)\neq M$ then there is
 a reparameterization of the flow which is not separating.
 We wish to remark that from the proof of
 \cite[Theorem 3.4]{A4} we have:
if $l$ is an arc, transversal to the flow and contained in the wandering set, then for all $\delta>0$ there is a subarc $l'\subset l$ and
$\alpha\in\cH(l')$ such that
$\diam\mathcal{X}^t_\alpha(l')<\delta$ for all $t\in\mathbb{R}$.
As $l$ is wandering, we can assume that different points
of $l'$ are in different orbits.
Thus, taking $A\subset l'$ with $N+1$ points we conclude that
$X$ cannot be Komuro $N$-expansive.
This proves that for a Komuro $N$-expansive flow of a compact surface $M$ it holds that $\Omega(X)=M$.
\end{proof}

Now we are ready to prove our last result in this paper:

\begin{proof}[Proof of Theorem \ref{D}]
Since every Komuro expansive flow is Komuro $N$-expansive, we
suppose that $X$ is Komuro $N$-expansive on the compact surface $M$.
We will show that $X$ is Komuro expansive.
From Lemmas \ref{l-sing-finito} and \ref{l-omega-tudo}
we know that $\sing(X)$ is finite and $\Omega(X)=M$.
Therefore, if $p\in M$ is a periodic (non-singular) point then every point close to $p$ has to be periodic too. This is because the return map to a small transversal though $p$ cannot have wandering points.
Thus, we obtain an open cylinder of periodic points (\emph{i.e.,} an open arc which is transversal to the flow, whose first return map is the identity). This easily contradicts the $N$-expansivity of $X$. Thus, $X$ has no periodic orbit.

Now we apply \cite[Theorem 6.7]{A5}, where
it is proved that if $M$ is a compact surface different of the torus, a flow $X$
is Komuro expansive (there called, simply, 'expansive') if and only if $\Omega(X)=M$, $X$ has not periodic orbits and $\sing(X)$ is
finite.
Therefore, we conclude that, excluding the torus, every Komuro $N$-expansive flow on a compact surface is Komuro expansive (Komuro 1-expansive).

Finally we consider the case of the torus.
By \cite[Remark 6.6]{A5} we have that the torus does not admit Komuro expansive flows. Thus, it only remains to prove that there are no Komuro $N$-expansive flows on the torus.
We assume that $M$ is the torus.
Since $\sing(X)$ is finite, $\Omega(X)=M$ and there are no periodic orbits we can apply
\cite[Theorem 3]{CGL} to conclude that each singularity is
a \emph{multi-saddle}, see Figure \ref{figuraMultiSaddle}.
%\begin{figure}[htb]
%\begin{center}
%\includegraphics[height=3.5cm]{multisaddles.png}
%\caption{Multi-saddle singular points of index 0, -1, -2.                                                                                                                                                                                                                                                                                                                                                                                                                                                                                                                                                                                                                                                                                 } \label{figuraMultiSaddle}
%\end{center}
%\end{figure}
In particular, $X$ cannot have atracting or repelling singular points (\emph{i.e.}, singular points of positive index).
Applying PoincarÃ©-Hopf Theorem, the flow can only have singular points of index 0.
This kind of singularities are locally conjugate to
the flow defined by the plane vector field $Y(x,y)=(x^2+y^2,0)$ near the origin.
Each singular point of these kind can be \emph{removed} by replacing
a rectangle around it with a standard flow-box (for instance, generated by the vector field $Z(x,y)=(1,0)$).
After removing these singularities we can apply
\cite[Lemma 4.1]{A5} to conclude that
the flow we obtain is a minimal (irrational) flow.
That is, the original flow is obtained from a minimal flow
by adding a finite number of index 0 singularities.
Let $l$ be a small arc transverse to the flow.
In this arc we can take $N+1$ points whose orbits do not converge to a singularity neither for $t\to+\infty$ nor $t\to-\infty$. Therefore, their orbits are reparameterizations of $N+1$ orbits of an irrational flow.
Since irrational flows are isometries, we see that the trajectories of these $N+1$ points in the arc $l$
can be reparameterized in order to contradict the $N$-expansivity.
This proves that Komuro $N$-expansivity implies Komuro expansivity on compact surfaces.

Now we give examples of Komuro cw-expansive flows of compact surfaces which are not Komuro $N$-expansive. They are similar to those considered in the previous paragraph. Let $Y$ be a vector field generating a minimal flow of the torus.
Take a non-negative smooth function $\rho$ on the torus vanishing only at some point $p\in M$.
Let $X^t$ be the flow generated by the vector field $\rho Y$.
If we take a continuum $A$ which is not contained in an orbit, we can take two points $x,y\in A$ such that $X^t(x)\to p$ as $t\to +\infty$ and the positive orbit of $y$ is dense in $M$.
Therefore,
$\diam(\mathcal{X}^t_\alpha(A))$ is bounded away from zero, independently from $A$ and $\alpha$.
This implies that $X^t$ is Komuro cw-expansive.
As explained above, $X^t$ cannot be $N$-expansive and the proof ends. Other examples with inflinitely many singularities or wandering points can be found in \cite{A3}.
\end{proof}

%Let us introduce a definition in order to indicate a generalization of Theorem \ref{D} and state an open problem or future direction of research.
%
%\begin{defi}
%{A flow $X^t$ is \emph{Komuro finite(countable)-expansive} if for any $\epsilon>0$ there is a $\delta>0$ such that if $A\subset M$ and $\alpha\in \cH(A)$ satisfies
%$$\diam(\mathcal{X}^t_\alpha(A) )<\delta, \ \ \forall t\in\mathbb{R},$$
%then there are $t_0\in\mathbb{R}$ and a finite (countable) subset $B\subset A$ such that $\mathcal{X}^{t_0}_\alpha(A)\subset X^{(-\epsilon,\epsilon)}(B)$.} {Recall that $ \cH(A)$
%is as at (\ref{def-H(A)}).}
%\end{defi}
%
%It seems clear that the proof of Theorem \ref{D} works for Komuro finite-expansive flows on surfaces.
%To get the result to countable-expansivity, it is needed first to control or bound the number of singularities or to work directly with infinitely many  singularities.
%It would be interesting to characterize Komuro countable-expansive flows of compact surfaces.
%

\vspace{0.2cm}
\noindent
{\em  M.J. Pacifico},
\noindent Instituto de Matem\'atica,
Universidade Federal do Rio de Janeiro,
C.P. 68.530\\ CEP 21.945-970,
Rio de Janeiro, RJ, Brazil.\\
E-mail: pacifico@im.ufrj.br 
\vspace{0.2cm}

\noindent
{\em W. Cordeiro},
\noindent
Faculty of Mathematics and Computer Science, Nicolaus Copernicus University \\
Chopina 12/18, 87-100 Toru\'n, Poland \\
E-mail: welingtonscordeiro@gmail.com
\vspace{0.2cm}

\noindent
{\em A. Artigue},
\noindent
Departamento de MatemÃ¡tica y EstadÃ­stica del Litoral,
Centro Universitario Regional Litoral Norte, Universidad de la RepÃºblica, C.P. 50000, Rivera 1350, Salto, Uruguay.\\
 E-mail: artigue@unorte.edu.uy

\end{document}